\DeclareMathAlphabet{\mathpzc}{OT1}{pzc}{m}{it}
\newtheorem{theorem}{Theorem}[section]
\newtheorem{lemma}[theorem]{Lemma}
\newtheorem{proposition}[theorem]{Proposition}
\newtheorem{corollary}[theorem]{Corollary}
\newtheorem{observation}[theorem]{Observation}
\newtheorem{fact}[theorem]{Fact}
\newtheorem{claim}[theorem]{Claim}
\newtheorem{maintheorem}[theorem]{Main Theorem}
\theoremstyle{definition}
\newtheorem{definition}[theorem]{Definition}
\newtheorem{example}[theorem]{Example}
\theoremstyle{remark}
\newtheorem{remark}{Remark}
\newtheorem{question}{Question}
\def\hook{\upharpoonright}
\def\forces{\Vdash}
\newfont{\ssi}{cmssi12 at 12pt}
\newenvironment{ea*}{\begin{eqnarray*}}{\end{eqnarray*}}
\newcommand{\calA}{\mathcal{A}}
\newcommand{\calB}{\mathcal{B}}
\newcommand{\calL}{\mathcal{L}}
\newcommand{\calM}{\mathcal{M}}
\newcommand{\calN}{\mathcal{N}}
\renewcommand{\phi}{\varphi}
\newcommand{\ZFC}{\ensuremath{\mathsf{ZFC}}\xspace}
\newcommand{\ZF}{\ensuremath{\mathsf{ZF}}\xspace}
\newcommand{\proves}{\vdash}
\def\<#1>{\langle#1\rangle}
\renewcommand{\P}{{\mathord{\mathbb P}}}
\newcommand{\MA}{\ensuremath{\mathsf{MA}}}
\newcommand{\MP}{\ensuremath{\mathsf{MP}}}
\newcommand{\ColNothing}{\mathrm{Col}}
\newcommand{\Col}[1]{\ColNothing(#1)}
\newcommand{\MPColNothing}[1]{\MP_{\Col{\dot{\kappa}}}}
\newcommand{\CH}{\ensuremath{\mathsf{CH}}\xspace}
\def\hook{\upharpoonright}
\def\forces{\Vdash}
\def\ZFC{\mathsf{ZFC}}
\def\PA{\mathsf{PA}}
\def\MA{\mathsf{MA}}
\def\MAone{\mathsf{MA}_{\aleph_1}}
\def\CH {\mathsf{CH}}
\title{Destructibility and Axiomatizability of Kaufmann Models}
\author[Switzer]{Corey Bacal Switzer}
\address[C.~B.~Switzer]{Institut f\"{u}r Mathematik, Kurt G\"odel Research Center, Universit\"{a}t Wien, Kolingasse 14-16, 1090 Wien, AUSTRIA}
\email{corey.bacal.switzer@univie.ac.at}
\thanks{\emph{Acknowledgements:} The author would like to thank the
Austrian Science Fund (FWF) for the generous support through grant number Y1012-N35. Data sharing not applicable to this article as no datasets were generated or analysed during the current study.}
\subjclass[2010]{Primary: 03C62 Secondary: 03C80, 03E50}
\keywords{Kaufmann Models, strong logics, destructibility, Martin's Axiom}
\date{}
\begin{document}

\maketitle

\begin{abstract}
A Kaufmann model is an $\omega_1$-like, recursively saturated, rather classless model of $\PA$ (or $\ZF$). Such models were constructed by Kaufmann under the combinatorial principle $\diamondsuit_{\omega_1}$ and Shelah showed they exist in $\ZFC$ by an absoluteness argument. Kaufmann models are an important witness to the incompactness of $\omega_1$ similar to Aronszajn trees. In this paper we look at some set theoretic issues related to this motivated by the seemingly na\"{i}ve question of whether such a model can be ``killed" by forcing without collapsing $\omega_1$. We show that the answer to this question is independent of $\ZFC$ and closely related to similar questions about Aronszajn trees. As an application of these methods we also show that it is independent of $\ZFC$ whether or not Kaufmann models can be axiomatized in the logic $L_{\omega_1, \omega} (Q)$ where $Q$ is the quantifier ``there exists uncountably many".
\end{abstract}

\section{Introduction}
A Kaufmann model is an $\omega_1$-like, recursively saturated, rather classless model (these terms are defined below). Kaufmann first constructed such models for $\PA$ in \cite{Kauf77} under the combinatorial principle $\diamondsuit_{\omega_1}$ and noted in that paper that a similar construction works for models of $\ZF$. In \cite{Shelah2ndorderprop2} Shelah showed that Kaufmann models (for $\PA$ and $\ZF$) exist in $\ZFC$ by an absoluteness argument. These structures form an important class of models of arithmetic (and set theory) that have been extensively studied, see \cite[Chapter 10]{MOPAKS} and the references therein. There are several reasons for this. First of all a Kaufmann model represents a counterexample to the analogue of several theorems about countable recursively saturated models of $\PA$ (and $\ZF$) holding at the uncountable including most notably the fact that countable recursively saturated models of $\PA$  have inductive partial satisfaction classes, see \cite[Theorem 1.9.3, Proposition 1.9.4]{MOPAKS}. They also are a witness to set theoretic incompactness at $\omega_1$. For instance, the following is immediate from the fact that all countable, recursively saturated models of $\PA$ have satisfaction classes and Tarski's theorem on the undefinability of truth.  
\begin{proposition}
Let $\calM$ be a Kaufmann model of $\PA$. By rather classlessness $\calM$ cannot have a partial inductive satisfaction class. However, there is a club of countable elementary submodels $\calN \prec \calM$ so that $\calN$ carries a satisfaction class.
\end{proposition}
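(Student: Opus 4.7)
The plan is to handle the two assertions separately. For the first, I would show that any partial inductive satisfaction class $S \subseteq M$ is automatically a class of $\calM$ in the piecewise-coded sense. The key input is that $(\calM, S)$ satisfies full induction in the language expanded by a predicate for $S$; this implies every bounded $S$-definable subset of $M$ is $\calM$-finite, so in particular $S \cap [0,a)$ is coded in $\calM$ for each $a \in M$. Rather classlessness then forces $S$ to be parametrically definable over $\calM$. But $S$ decides correctly the Tarskian compositional clauses for every formula of standard complexity (in the language of $\calM$ augmented with constants for elements of $M$), so a definable such $S$ would yield an $\calM$-definable truth predicate for the standard-length formulas, contradicting Tarski's undefinability theorem applied internally to $\calM$.

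For the second assertion I would use $\omega_1$-likeness together with the recursive saturation of $\calM$ to produce club-many countable $\calN \prec \calM$ that are themselves recursively saturated; the cited Theorem~1.9.3 of \cite{MOPAKS} then supplies a satisfaction class on each such $\calN$. To see that the recursively saturated countable elementary submodels contain a club in $[\calM]^{\aleph_0}$, I would run the standard chain construction: starting from any countable $\calN_0 \prec \calM$, build an elementary tower $\calN_0 \prec \calN_1 \prec \cdots \prec \calM$ in which each $\calN_{n+1}$ realizes every recursive type over $\calN_n$. This is possible because $\calM$ is recursively saturated and there are only countably many such types to consider at each stage. The union $\calN = \bigcup_n \calN_n$ is then countable, recursively saturated, and elementary in $\calM$, with $\calN_0 \subseteq \calN$. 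Continuity of the construction and closure under countable unions give the club.

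The main (and rather mild) obstacle is the bookkeeping for the first step: one must verify that the combination of ``partial'' plus ``inductive'' is enough to force $S$ to capture all Tarskian compositional clauses on standard-length formulas, so that Tarski's theorem really does apply internally once $S$ has been shown parametrically definable. The second step is an entirely standard application of chain constructions for $\omega_1$-like recursively saturated models of $\PA$, combined with the quoted fact that all countable recursively saturated models of $\PA$ carry a satisfaction class.
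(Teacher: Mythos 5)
Your proposal is correct and follows exactly the route the paper intends: the first assertion via inductiveness forcing $S$ to be a class, rather classlessness forcing it to be definable, and Tarski's undefinability theorem for the contradiction; the second via the standard elementary-chain argument producing club-many countable recursively saturated $\calN \prec \calM$, each of which carries a satisfaction class by the cited Theorem~1.9.3 of \cite{MOPAKS}. The paper states this proposition as immediate from precisely these two facts, so your write-up simply supplies the routine details of the same argument.
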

Of course the above proposition is also true for models $\ZF$ with the analogue of a ``partial inductive satisfaction class" defined appropriately, see below.

Kaufmann models are also very closely related to trees. This was used in Shelah's absoluteness proof and also features prominently in Schmerl's work on generalizations of Kaufmann models to higher cardinals \cite{Sch81}. The analogy with trees is the jumping off point for the current work. Our na\"{i}ve question that started this work was whether there could be a Kaufmann model which could be killed by forcing without collapsing $\omega_1$. Note that this is similar to asking whether there is an Aronszajn tree to which an uncountable branch can be added by forcing with out collapsing $\omega_1$. The answer in that case is independent of the axioms of $\ZFC$: if there is a Souslin tree the answer is ''yes" while if all Aronszajn trees are special the answer is ``no". In the case of Kaufmann models the answer turns out to be the same. Specifically we prove the following theorem (proved as Theorems \ref{MAthm} and \ref{diamondthm} respectively).

\begin{maintheorem}
Let $T$ be any consistent completion of either $\PA$ or $\ZF$.
\begin{enumerate}
\item
Assume $\MA_{\aleph_1}$ holds. If $\calM \models T$ is a Kaufmann model and $\mathbb P$ is a forcing notion so that $\forces_{\mathbb P}$``$\calM$ is not Kaufmann" then $\mathbb P$ collapses $\omega_1$.
\item
Assume the combinatorial principle $\diamondsuit_{\omega_1}$ holds. There is a Kaufmann model $\calM \models T$ and a Souslin tree $S$ so that forcing with $S$ adds a satisfaction class to $\calM$.
\end{enumerate}
\label{mainthm1}
\end{maintheorem}

It remains unclear whether the property of ``being destructible by $\omega_1$-preserving forcing" has a completely combinatorial or model theoretic characterization but the models used in the proof of Main Theorem \ref{mainthm1} can be used to show the following, which is the second main theorem of this paper (See Theorem \ref{mainthm22} below).

\begin{maintheorem}
Let $Q$ be the quantifier \say{there exists uncountably many ...} and $L_{\omega_1, \omega} (Q)$ be the infinitary logic $L_{\omega_1, \omega}$ enriched by this quantifier. The following hold:
\begin{enumerate}
\item
Under $\MA_{\aleph_1}$ there is an $L_{\omega_1, \omega}(Q)$ sentence $\psi$ in the language of $\PA$ (respectively $\ZF$) enriched with a single unary function symbol $f$, $\calL_{\PA} (f)$ (respectively $\calL_{\ZF}(f)$), so that a model $\calM \models \PA$ ($\calM \models \ZF$) is Kaufmann if and only if there is an expansion of $\calM$ to an $\calL_{\PA}(f)$-structure (respectively to a $\calL_{\ZF}(f)$) satisfying $\psi$.
\item
Under the combinatorial princple $\diamondsuit_{\omega_1}$ there is a Kaufmann model $\calM$ so that given any expansion $\calL'$ of the language of $\PA$ (respectively $\ZF$) and any expansion of $\calM$ to an $\calL'$-structure, $\calM '$, and any countable set $X$ of $L_{\omega_1, \omega}(Q)$ sentences in the signature $\calL'$ there is a model $\calN$ which agrees with $\calM '$ about the truth of every sentence in $X$ but carries a satisfaction class for its $\calL$-reduct. In particular, the $\calL$-reduct of $\calN$ is not rather classless.
\end{enumerate}
\label{mainthm2}
\end{maintheorem}

Informally the Main Theorem \ref{mainthm2} can be phrased as saying it is independent of $\ZFC$ if Kaufmann models can be axiomatized by an $L_{\omega_1, \omega}(Q)$ sentence. This logic is a natural one to consider in the context of such models since being $\omega_1$-like and recursively saturated are expressible here hence the question is really about (in)expressibility of rather classlessness. Moreover this logic plays an important role in Shelah's aforementioned absoluteness result, \cite[Theorem 6]{Shelah2ndorderprop2}, and is used in several other applications of abstract model theory to $\omega_1$-like structures, see \cite{L(Q)}. In fact, part 1 can be deduced as an immediate corollary of the proof of  \cite[Theorem 6]{Shelah2ndorderprop2}. I do not know if this was observed by Shelah at the time. Part 2, as far as I know, is completely new. 

A first draft of this paper focused solely on the case of models of arithmetic. The anonymous referee astutely observed that in fact many of the results presented extend to a broader class of models which can be described in terms of tree-like models satisfying the collection scheme (see \cite{KeislerTarskiSymp} and the definitions in subsection 2.1 below). In particular all the results applied to models of $\ZF$. Following the referee's suggestion, in this version we have reworded several of the results to accommodate this more general perspective. Specifically, we give the basic set up for tree-like models in section 2 and then couch proofs for the rest of the paper in terms of $\PA$ and $\ZF$. Presumably many of the ideas presented here could be applied to other ``foundational theories" including weaker set theories, however we leave the investigation of which specific theorems apply to which specific weak theories for later work. 

The rest of this paper is organized as follows. In Section 2 we give some basic definitions and background that will be used throughout. In section 3 Part 1 of Main Theorem \ref{mainthm1} is proved. In Section 4 Part 2 of Main Theorem \ref{mainthm1} is proved. In Section 5 Main Theorem \ref{mainthm2} is proved. Section 6 concludes with some open questions and lines for further research. 

\subsection*{ Acknowledgments} I would like to thank Ali Enayat, Roman Kossak, and Bartosz Wcis\l{}o for several very informative and helpful conversations relating to the material in this paper. I would also like to thank Ali Enayat for pointing out the papers \cite{KeislerTarskiSymp} and \cite{Enayatstandardmodel} to me. Finally I would like to thank the anonymous referee for many insightful comments, in particular the observation that the results here apply more broadly to tree-like models satisfying the collection scheme. 

\section{Tree-like Models and Basic Definitions}

Throughout we will be interested in the languages $\mathcal L_{\PA}$ of $\PA$, and $\calL_{\ZF}$ of $\ZF$ which for us, in the case of $\PA$, includes a symbol $\leq$ for the natural ordering definable in $\PA$. All of the results below work equally well for any countable extension of $\mathcal L_{\PA}$ (respectively $\calL_{\ZF}$) and any theory $\PA^*$, that is $\PA$ in that language with induction extended to formulas in that language (respectively $\ZF^*$, that is $\ZF$ with the comprehension and replacement schemes expanded to include all formulas in this language). Given a first order structure such as $\calM$, $\calN$, $\calM_\alpha$ etc we always let the associated non-calligraphic letter, $M$, $N$, $M_\alpha$ etc denote the universe of the model. When it won't cause confusion this won't be stated explicitly. Also, when it will not cause confusion we will refer somewhat ambiguously to a language $\calL$ which could be either $\calL_{\PA}$ or $\calL_{\ZF}$ depending on the context. Since many of the proofs work mutatis mutandis for the two theories we will prove the statements once in this general setting. For ease of notation by a {\em model} we will always mean an $\mathcal L$ structure $\calM$ modeling $\PA$ (respectively $\ZF$) unless otherwise stated. Also, throughout {\em definable} means definable with arbitrary parameters unless specified otherwise. Since we're looking at applications of set theory to model theory, and hoping to appeal to researchers in both these fields, we have included more definitions and proof sketches than usual in order to make this paper more self contained for the reader who is an expert in only one of these subjects. For all undefined terms in the model theory of $\PA$ we suggest the reader consult \cite{MOPAKS}. For set theory we recommend \cite{KenST}. 

\subsection{Tree-Like Models}

We begin with a brief overview of the vocabulary of tree-like models. Tree-like models were first introduced and studied in depth by Keisler in \cite{KeislerTarskiSymp} and the reader is encouraged to look to that article for more details and background. Recall that a {\em tree-like} order $\mathcal T = \langle T, \leq_T\rangle$ is a partial order with the additional property that given any $t \in T$ the set $R(t) := \{u \in T \; | \; u \leq_T t\}$ is linearly ordered by $\leq_T$. As is standard, when it will cause no confusion we will confuse a tree-like order with its universe $T$. A {\em tree} is a tree-like order $T$ with the additional property that $\leq_T$ is well-founded. A {\em branch} $B \subseteq T$ is a maximal, linearly ordered subset of $T$. Throughout this article we will be interested in tree-like orders with the following additional properties.
\begin{enumerate}
\item
(Rooted) There is a unique $t_0 \in T$ which is minimal with respect to $\leq_T$.
\item
(Normal) For every $t \in T$ there are $s, u \in T$ so that $t \leq_T s, u$ and $s$ and $u$ are incomparable with respect to $\leq_T$.
\end{enumerate}
Note that Item 2 implies that in particular there are no maximal elements of $T$. From now on we will assume that all of our tree-like orders are rooted and normal.

\begin{definition}
A {\em ranked tree} is a structure $\mathcal T = \langle T, \leq_T, O, \leq_O, r\rangle$ so that 
\begin{enumerate}
\item
$\langle T, \leq_T\rangle$ is a rooted, normal tree-like order,
\item
$\langle O, \leq_O\rangle$ is a linear order
\item 
$r:T \to O$ is a function, called a {\em ranking function}, so that $t \leq_T u$ implies $r(t) \leq_O r(u)$ with equality holding on the left if and only if it holds on the right.
\item
For each $t \in T$, the image of $\{u \in T \; | \; u \leq_T t \, {\rm or} \, t \leq_T u\}$ under $r$ maps onto $O$.
\end{enumerate}
\end{definition}

Given a ranked tree $\mathcal T = \langle T, \leq_T, O, \leq_O, r\rangle$ and a branch $B \subseteq T$. We say that $B$ is {\em cofinal} if its image under $r$ surjects onto $O$. Throughout this paper we will only be interested in cofinal branches and ``branch" will mean cofinal branch unless otherwise stated. If $\kappa$ is a cardinal, we say that ranked tree $\mathcal T$ is $\kappa$-{\em like} if $O$ has size $\kappa$ but for each $a \in O$ the set of $t \in T$ so that $r(t) \leq_O a$ has size ${<}\kappa$.

\begin{definition}
Let $\kappa$ be a cardinal.
\begin{enumerate}
\item
A structure $\mathcal A = \langle A, T^A, \leq_T, O^A, \leq_O, r, P_1, ...\rangle$ is called a {\em tree-like model} if $\mathcal T^A = \langle T^A, \leq_T, O^A, \leq_O, r\rangle$ is a ranked tree.
\item
If $\mathcal A$ is any $\mathcal L_0$-structure for any first-order language $\mathcal L_0$ we will equally call $\mathcal A$ a {\em tree-like model} if there are (parametrically) definable predicates $T^A$, $O^A$, definable relations $\leq_T$, $\leq_O$ and a definable function $r$ so that $\mathcal A$ enriched with this extra structure form a tree-like model in the obvious way.
\item
A tree-like model (in either sense) is $\kappa$-like if its associated ranked tree is $\kappa$-like.
\item
A tree-like model is {\em rather branchless} if every cofinal branch is definable.
\end{enumerate}
\end{definition}

For most ``foundational theories", every model can be thought of as a tree-like model. We give the details explicitly for $\ZF$ and $\PA$ since these are the theories we are focusing on in this paper.

\begin{example}
Any model of either $\ZF$ and $\PA$ is tree-like as witnessed by the tree-like structures defined below.
\begin{enumerate}
\item
In the case of $\ZF$ define the tree-like by saying $x \in T$ if and only if $x$ codes a pair $( a, \alpha)$ where $\alpha$ is an ordinal and $a \subseteq V_\alpha$. We let $x \leq_T y$ if and only if $x$ codes $(a, \alpha)$, $y$ codes $(b, \beta)$ and $\alpha < \beta$ and $b \cap V_\alpha = a$. The linear order is the ordinals of the model and the ranking function is simply $r(a, \alpha) = \alpha$. Given a model $\calM \models \ZF$ we will refer to the version of this tree-like order defined in $\calM$ by $T^M_{\ZF}$.
\item
In the case of $\PA$ the tree-like order is simply the tree of finite binary sequences coded in the model. Given two such sequences $s, t$ we let $s \leq_T t$ just in case $t$ is an end extension of $s$. The linear order is the order of the model and the ranking function is just the length of the sequence. Given a model $\calM \models \PA$ we will refer to the version of this tree-like order defined in $\calM$ as $T^M_{\PA}$.
\end{enumerate}
\label{example1}
\end{example}
If $T$ is a theory for which every model can be construed as a tree-like model as in the above example we will refer to $T$ as a {\em tree-like theory}. Thus the above shows that $\PA$ and $\ZF$ are tree-like theories.

We finish this subsection with the introduction of one more idea. Given a tree-like model $\mathcal A$ we say that $\mathcal A$ satisfies the {\em collection scheme} if it satisfies the following scheme of sentences (which range of all formulas $\varphi(\vec{x})$):
\begin{equation*}
\forall x \in T \,\exists z \in T \, \forall t \in R(x) \, [\exists u \in T \, \varphi(t, u, ...) \to \exists u \in R(z) \, \varphi(t, u, ...)]
\end{equation*}
where $z$ does not occur in $\varphi(\vec{x})$. Recall here that $R(x)$ is the set of predecessors of $x$ in the tree ordering. In words the collection scheme asserts roughly that for all $x \in T$, if $\varphi(t, ...)$ has a witness for each $t \in R(x)$ then we can collect all of these witnesses together and they live in some bounded region of the tree. 

\begin{fact}[See Example 3.2 of \cite{KeislerTarskiSymp}]
Every model of $\ZF$ or $\PA$ satisfies the collection scheme with the tree-like structures described above.
\end{fact}

As mentioned in the introduction many of the results of this paper apply to theories all of whose models are tree-like and satisfy the collection scheme, however for definiteness we will primarily stick to $\ZF$ and $\PA$ from now on. 

\subsection{Basic Definitions for Models of $\PA$ and $\ZF$}

Importing the definitions from the previous section we here give an account of what we will need about model theory of $\ZF$ and $\PA$ for the rest of the paper. Throughout the rest of the paper, when discussing a model $\calM$ of $\ZF$ if we refer to an {\em ordinal in} $\calM$ we mean an element $a \in M$ so that $\calM \models$``$a$ is an ordinal" (and not necessarily an ordinal of the meta-theory). 

First let us note that for a cardinal $\kappa$ a model $\calM$ of $\PA$ is $\kappa$-like if it has size $\kappa$ but for all $a \in M$ $|[0, a]| < \kappa$. A model $\calM$ of $\ZF$ is $\kappa$-like if there are $\kappa$-many ordinals in $M$ but for every ordinal $\alpha \in M$ the set $V_\alpha$ (as defined in $\calM$) has size less than $\kappa$ and in particular the set of ordinals less than $\alpha$ has size less than $\kappa$.

\begin{definition}
Let $\kappa$ be a cardinal. 
\begin{enumerate}
\item
By arithmetizing the language of arithmetic and/or set theory we can think of $\calL$-formulas as coded computably by natural numbers. As such it makes sense to talk about a set of formulas as being e.g. computable, arithmetic etc. A model $\calM$ (of either $\PA$ or $\ZF$) is {\em recursively saturated} if it realizes every computable type with finitely many parameters.
\item
If $\calM$ is a model of $\PA$, then a {\em class} is a subset $A \subseteq M$ so that for all $a \in M$ the set $A \cap a := \{ b \in A \; | \; \calM \models b < a\}$ is definable in $\calM$ (parameters allowed). 
\item
If $\calM \models \ZF$ then a {\em class} is a subset $A \subseteq M$ so that for all ordinals $\alpha \in M$ we have that $A \cap V_\alpha := \{b \in A \; | \; \calM \models b \in V_\alpha \}$ is definable in $\calM$. In other words $(A \cap V_\alpha, \alpha) \in T^M_{\ZF}$.
\item
A model $\calM$ (of $\PA$ or $\ZF$) is {\em rather classless} if every class is definable.
\item
A $\kappa$-{Kaufmann Model} (of $\PA$ or $\ZF$) is a model $\calM$ which is $\kappa$-like, recursively saturated and rather classless. If $\kappa = \omega_1$ then we simply say $\calM$ is a {\em Kaufmann model}.
\end{enumerate}
\end{definition}

Note that a model of $\PA$ (respectively $\ZF$) is rather classless if and only if every branch of the tree defined in Example \ref{example1} is definable. In other words being rather classless is a particular example of being rather branchless.

The notion of a Kaufmann model can be defined for other theories as well and in fact every tree-like theory satisfying the collection scheme has Kaufmann models (with ``rather classless" replaced with ``rather branchless"). In particular there are Kaufmann models of $\PA$ and $\ZF$. As mentioned above, this was shown by Kaufmann in \cite{Kauf77} under the assumption that the combinatorial principle $\diamondsuit_{\omega_1}$ holds and the additional set theoretic assumption was eliminated by Shelah in \cite{Shelah2ndorderprop2}. Presently we recall a brief sketch of the existence of Kaufmann models for $\PA$ and $\ZF$ under the combinatorial principle $\diamondsuit_{\omega_1}$ as ideas from these arguments will be used repeatedly throughout the paper. Recall that $\diamondsuit_{\omega_1}$ is the statement that there is a sequence $\{A_\alpha \; | \; \alpha < \omega_1\}$ so that for all $\alpha< \omega_1$ $A_\alpha \subseteq \alpha$ and for every $A \subseteq \omega_1$ the set $\{\alpha \; | \; A \cap \alpha = A_\alpha\}$ is a stationary subset of $\omega_1$. A sequence such as $\{A_\alpha \; | \; \alpha < \omega_1\}$ as described above is called a $\diamondsuit$-sequence. From now on we will shorten the phrase ``the combinatorial principle $\diamondsuit_{\omega_1}$" to read simply ``$\diamondsuit$".

\begin{theorem}[Kaufmann \cite{Kauf77}]
If $\diamondsuit$ holds then every countable, recursively saturated model has an elementary end extension which is Kaufmann.
\label{ksthm}
\end{theorem}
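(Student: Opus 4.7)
The plan is to build, by transfinite recursion on $\alpha < \omega_1$, an elementary chain $\langle \calM_\alpha : \alpha < \omega_1 \rangle$ of countable recursively saturated models with each $\calM_{\alpha+1}$ a proper elementary end extension of $\calM_\alpha$, starting from the given $\calM_0$, and $\calM_\lambda = \bigcup_{\alpha<\lambda}\calM_\alpha$ at limits. The union $\calM := \bigcup_{\alpha<\omega_1}\calM_\alpha$ is then automatically an elementary end extension of $\calM_0$; it is $\omega_1$-like because each $\calM_\alpha$ is countable and the chain consists of end extensions, and it is recursively saturated because any finite parameter tuple and any computable type over it already lie in some $\calM_\alpha$, where the type is realized. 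The real content of the argument is to arrange rather classlessness in $\calM$, and this is where $\diamondsuit$ enters.

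Fix a $\diamondsuit_{\omega_1}$-sequence $\langle S_\alpha : \alpha < \omega_1 \rangle$ and, via a coding, arrange that $M_\alpha$ is (or corresponds to) a countable ordinal, so that $M = \omega_1$. At the successor stage $\alpha \to \alpha+1$, if $S_\alpha$ happens to be a non-definable class of $\calM_\alpha$, I would choose $\calM_{\alpha+1}$ to be a countable, recursively saturated proper elementary end extension of $\calM_\alpha$ tailored to destroy $S_\alpha$ as a potential class of the final $\calM$. Concretely, $\calM_{\alpha+1}$ should omit, for every $\calL$-formula $\phi(x,\bar y)$, the type
\[
p_\phi(\bar y) := \{\phi(a,\bar y)\leftrightarrow (a\in S_\alpha) : a\in M_\alpha\},
\]
and this omission should persist in every further elementary end extension. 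Since $S_\alpha$ is not definable in $\calM_\alpha$, each $p_\phi$ is not principal over $M_\alpha$: any formula with parameters in $M_\alpha$ implying the type would, via its witness, give an $\calM_\alpha$-definition of $S_\alpha$.

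The main obstacle is exactly this joint construction: to find a countable, recursively saturated proper elementary end extension of $\calM_\alpha$ that omits the $p_\phi$ stably. I would carry this out by a back-and-forth Henkin argument of length $\omega$ which interleaves (i) adjoining a new element above $M_\alpha$ to guarantee properness of the end extension, (ii) realizing enumerated computable types over finite parameter tuples to preserve recursive saturation, and (iii) diagonalizing against each pair $(\phi,\bar d)$, using non-principality of $p_\phi$ to find a refuting witness for $\bar d$. Stability under further extensions is secured by choosing $\calM_{\alpha+1}$ as a \emph{conservative} end extension in the MacDowell--Specker sense, so that every subset of $M_\alpha$ definable in $\calM_{\alpha+1}$ is already definable in $\calM_\alpha$; conservativity is then propagated along the chain, with a routine check at limits, ensuring that no $p_\phi$ is ever realized in a later $\calM_\beta$, hence not in $\calM$.

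To finish, suppose for contradiction $A$ is a non-definable class of $\calM$. A standard reflection argument, using the elementarity of the chain and recursive saturation of each $\calM_\alpha$, shows that the set of $\alpha < \omega_1$ for which $A \cap M_\alpha$ is a non-definable class of $\calM_\alpha$ contains a club $C$. By $\diamondsuit$ applied to (a coding of) $A$, the set $\{\alpha : S_\alpha = A\cap M_\alpha\}$ is stationary, so some $\alpha \in C$ has $S_\alpha = A \cap M_\alpha$ non-definable in $\calM_\alpha$. Pick any $c \in M$ with $c > M_\alpha$; then $A \cap [0,c]$ is definable in $\calM$ by some $\phi(x,\bar d)$ with $\bar d \in M$, and the restriction to $M_\alpha$ is exactly $S_\alpha$, so $\bar d$ realizes $p_\phi$ somewhere along the chain. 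This contradicts the stable omission secured at stage $\alpha+1$. Hence $\calM$ is rather classless and is the sought Kaufmann elementary end extension of $\calM_0$.
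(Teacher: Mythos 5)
Your overall architecture (an elementary end-extension chain, $\diamondsuit$ guessing the undefinable classes on a club of stages, successor stages destroying the guessed set) is the same as the paper's, and your reflection/club argument and the final application of $\diamondsuit$ are fine. The gap is in your persistence mechanism at successor stages. You propose to take $\calM_{\alpha+1}$ to be a \emph{conservative} elementary end extension of $\calM_\alpha$ so that the types $p_\phi$ stay omitted in all later models. But a proper elementary end extension of a model of $\PA$ that is itself recursively saturated is never conservative: fixing $a\in M_{\alpha+1}\setminus M_\alpha$, recursive saturation of $\calM_{\alpha+1}$ realizes the recursive type (with parameter $a$) of a coded binary function on $[0,a]^2$ that agrees with $Tr_n$ on the $\Sigma_n$-formulas for every standard $n$; the resulting subset of $M_\alpha$ is $\calM_{\alpha+1}$-definable but cannot be $\calM_\alpha$-definable with parameters, by the ($\PA$-provable, parametrized) arithmetical hierarchy theorem. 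So you cannot have both recursive saturation of $\calM_{\alpha+1}$ and conservativity over $\calM_\alpha$, and without conservativity nothing in your construction prevents some later $\calM_\beta$ from realizing $p_\phi$ with parameters lying far above $M_{\alpha+1}$.

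The correct localization --- which is exactly what the paper's black box, Lemma \ref{Klem} (due to Kaufmann), provides --- is phrased in terms of \emph{coded} sets rather than types omitted forever: one builds $\calM_{\alpha+1}$ countable, recursively saturated, end-extending $\calM_\alpha$, and such that no $e\in M_{\alpha+1}$ codes an $\calM_{\alpha+1}$-finite set whose intersection with $M_\alpha$ is $A_\alpha$. This is a condition on $\calM_{\alpha+1}$ alone, and it suffices: if $A$ were a class of the final $\calM$ with $A\cap M_\alpha=A_\alpha$, then for any $c\in M_{\alpha+1}\setminus M_\alpha$ the set $A\cap[0,c]$ is $\calM$-definable and bounded, hence coded in $\calM$ by some $e<2^{c+1}$; since $M_{\alpha+1}$ is an initial segment of $M$ containing $2^{c+1}$, this $e$ lies in $M_{\alpha+1}$ and already codes $A_\alpha$ there, a contradiction. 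No control over later stages is required. (The existence of such an extension is itself the nontrivial content of Kaufmann's lemma --- your sketched Henkin construction simultaneously securing properness, recursive saturation and the omission is where the real work lies --- but the decisive error in your write-up is the appeal to conservativity, which is incompatible with the recursive saturation you also need.)
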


Before sketching the proof we need to note a few things. First, recall that in the context of $\PA$, given two models $\calM$ and $\calN$ we say that $\calN$ is an {\em elementary end extension} of $\calM$, denoted $\calM \prec_{end} \calN$ if $\calM \prec \calN$ and for every $y \in N \setminus M$ and $x \in M$ we have $\calN \models x \leq y$, i.e. $(M, \leq)$ is an initial segment of $(N, \leq)$. The foundational MacDowell-Specker theorem states that every model of $\PA$ has an elementary end extension, see \cite[Theorem 2.2.8]{MOPAKS}. In the context of $\ZF$, a model $\calN$ is an elementary end extension of $\calM$, in symbols $\calM \prec_{end} \calN$, if $\calM \prec \calN$ and for all $x, y \in N$ with $\calN \models x \in y$, if $y \in M$ then $x \in M$. Briefly if $\calN$ adds no new elements to sets in $\calM$\footnote{The definitions for end extension for these two theories can be unified in the language of tree-like models. Namely, if $\calA$ is a tree-like model and $\calA \prec \calB$ (which will also be a tree like model) then $\calB$ end-extends $\calA$ if and only if for all $a \in T^A$ and $b \in T^B$ if $b \leq_{T_B} a$ then $b \in T^A$, i.e. $T^A$ is an initial segment of $T^B$, as a partial order.}. Analogues of the MacDowell-Specker theorem for $\ZF$ are more complicated, see \cite{MacDowellSpeckerZF} for more details. 


The proof of Theorem \ref{ksthm} uses the following lemma, which is also due to Kaufmann. 

\begin{lemma}[Kaufmann \cite{Kauf77}]
Let $\calM$ be a countable recursively saturated model of $\PA$ (respectively $\ZF$) and $A \subseteq M$. If $A$ is not definable, then there is a countable, recursively saturated model $\calN$ so that $\calM \prec_{end} \calN$ and $A$ is not coded into $\calN$ i.e. there is no $a \in N$ so that $a$ codes an $\calN$-finite sequence $s_a$ (respectively is an element of $N$) and $M \cap s_a = A$ (respectively $M \cap \{b \in N \; | \; \mathcal N \models b \in a\} = A$).
\label{Klem}
\end{lemma}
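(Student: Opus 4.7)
The plan is to build $\calN$ as a countable, recursively saturated, \emph{conservative} elementary end extension of $\calM$, where ``conservative'' means every subset of $M$ parametrically definable in $\calN$ (with parameters from all of $N$) is already parametrically definable in $\calM$. Any coded set $s_a \cap M$ is parametrically definable in $\calN$, so such an $\calN$ cannot code $A$: if it did, $A$ would be parametrically definable in $\calM$, contradicting the hypothesis.

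To produce $\calN$, I would iterate the MacDowell--Specker construction. Set $\calN_0 := \calM$. Given a countable recursively saturated $\calN_n$, take a one-point conservative elementary end extension $\calN_n \prec_{end} \calN_{n+1}$ by fixing the complete type over $\calN_n$ of a new top element $c_n$ using an ``indicator'' ultrafilter $\mathcal{U}_n$ on the Boolean algebra of parametrically $\calN_n$-definable subsets of $N_n$. Two features of $\mathcal{U}_n$ are needed: (i) it contains every cofinal definable subset, ensuring $c_n > N_n$ and hence end extension; and (ii) it is ``indiscernible'' so that the type of $c_n$ is definable in $\calN_n$, which forces $\calN_{n+1}$ to be conservative over $\calN_n$. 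Interleave this with a standard dovetailing that realizes, by stage $n$, each computable type with parameters from $\bigcup_{k<n} N_k$ that is finitely satisfiable. The countable recursive saturation of $\calN_n$ is precisely what permits the partial indicator ultrafilter to be extended at every finite stage while simultaneously arranging the prescribed type realizations.

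Set $\calN := \bigcup_n \calN_n$. Then $\calN$ is countable, an elementary end extension of $\calM$ by composition, and recursively saturated: any computable type with finitely many parameters in $\calN$ has all its parameters in some $\calN_n$ and is realized at a later stage of the dovetailing. Conservativity of $\calN$ over $\calM$ follows by transitivity along the chain: if a subset of $M$ is definable in $\calN$ with parameters in $N_n$, then by conservativity of the last $n$ steps, it is successively definable in $\calN_{n-1}, \ldots, \calN_1, \calN_0 = \calM$. Consequently $A$ is not coded in $\calN$, as desired.

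The main obstacle I expect is arranging the one-step construction so that it simultaneously secures end extension, conservativity, and a prescribed type realization; the delicate bit is weaving the partial indicator ultrafilter together with the ``realize the next computable type'' requirement in $\omega$ many interleaved stages. This is a fairly standard adaptation of MacDowell--Specker in the definable-ultrafilter framework, and recursive saturation of $\calN_n$ is exactly the ingredient that keeps all countably many competing requirements jointly satisfiable throughout the construction.
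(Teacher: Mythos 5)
The paper does not actually prove this lemma (it is quoted from Kaufmann's paper), so I assess your argument on its own terms. There is a fatal problem at the very first step: the object you propose to build --- a \emph{conservative} elementary end extension $\calN$ of $\calM$ that is itself recursively saturated --- does not exist for any nonstandard $\calM$, in particular not for a recursively saturated one. Indeed, suppose $\calM \prec_{end} \calN$ is proper, $\calN$ is recursively saturated, and fix $c \in N \setminus M$, so $c$ lies above every element of $M$. Consider the recursive type $p(x,c) = \{\theta_n(x,c) : n < \omega\}$, where $\theta_n(x,c)$ says ``$x$ codes a set of pairs and for every $\Sigma_n$-formula $\phi$ and every $a$ with $\phi, a < c$, the pair $(\phi,a)$ lies in the set coded by $x$ if and only if $Tr_n(\phi,a)$''. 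It is finitely satisfiable: the bounded definable set $\{(\phi,a) : \phi \in \Sigma_k,\ \phi,a<c,\ Tr_k(\phi,a)\}$ is coded in $\calN$ and witnesses $\theta_0,\dots,\theta_k$, since $Tr_k$ provably agrees with $Tr_n$ on $\Sigma_n$ for $n \leq k$. A realization $e$ therefore codes a set whose trace on $M$ decides $\calM$-satisfaction of every standard formula at every point of $M$; by Tarski's undefinability of truth (the very fact the paper uses to see that satisfaction classes witness failure of rather classlessness) that trace is not definable in $\calM$. So $\calN$ codes, hence parametrically defines, an undefinable subset of $M$, and conservativity fails. This is exactly why Kaufmann's lemma is stated for a \emph{single} undefinable $A$: recursive saturation of the extension forces certain undefinable subsets of $M$ (truth oracles) to be coded, so one can only dodge one prescribed set, not all of them at once.

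The same tension breaks your construction, not merely its target: the dovetailing asks each stage both to realize new computable types (to make the union recursively saturated) and to remain conservative over the previous stage, but types of the kind above are precisely the realizations conservativity must forbid, so the interleaving cannot be carried out. The correct argument works locally with the given $A$: in a Henkin-style or chain construction of a recursively saturated end extension, one shows that for each potential code $a$ it is dense (consistent) to commit to some $y \in M$ on which the set coded by $a$ disagrees with $A$ --- for if every such commitment were inconsistent with the current condition, that condition would define $A$ over $\calM$, contradicting undefinability. That use of the hypothesis on $A$ is essential and cannot be replaced by global conservativity.
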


Both Lemma \ref{Klem}, and Theorem \ref{ksthm} are proved explicitly in \cite{Kauf77} for models of $\PA$ however, as Kaufmann remarks on \cite[p. 332]{Kauf77} they apply more generally to all tree-like models satisfying the collection scheme, with ``rather classless" replaced by ``rather branchless". In particular, the proofs apply mutatis mutandis to models of $\ZF$.

\begin{proof}[Proof of Theorem \ref{ksthm}]
Fix a countable recursively saturated model $\calM_0$ and a $\diamondsuit$ sequence $\vec{A} = \langle A_\alpha \; | \; \alpha < \omega_1\rangle$. We want to define a continuous chain $\langle \calM_\alpha \; | \; \alpha < \omega_1\rangle$ of countable, recursively saturated models so that $\calM_\alpha \prec_{end} \calM_{\alpha+1}$ for all $\alpha < \omega_1$ and the union of all the $\calM_\alpha$'s will be a Kaufmann model. This is done recursively. The universe of each model will be a countable ordinal. Note that there will necessarily be a club of $\delta < \omega_1$ so that $M_\delta = \delta$. \footnote{Here $M_\delta$ is the universe of $\mathcal M_\delta$, conforming to the convention mentioned in the first paragraph of this subsection} At limit stages we have to take unions since the chain is continuous so it remains to say what to do at successor stages. Suppose $\calM_\alpha$ has been defined. If $A_\alpha \subseteq M_\alpha$ is undefinable let $\calM_{\alpha+1}$ be as in Lemma \ref{Klem}, namely a countable, recursively saturated elementary end extension of $\calM_\alpha$ in which $A_\alpha$ is not coded. If $A_\alpha$ is not an undefinable subset of $M_\alpha$ (either because it's not a subset or because it's definable) then let $\calM_\alpha$ be {\em any} countable, recursively saturated elementary end extension of $\calM_\alpha$. This completes the construction.

Let $\calM = \bigcup_{\alpha < \omega_1} \calM_\alpha$. Clearly this model is an $\omega_1$-like, recursively saturated elementary end extension of $\calM_0$. The hard part is to show that it is rather classless. This is shown as follows: suppose $A \subseteq M$ is an undefinable class. It's straightforward to show that the set of $\alpha$ so that $A \cap M_\alpha$ is undefinable in $\mathcal M_\alpha$ is club, thus by $\diamondsuit$ there is an $\alpha$ so that $A \cap M_\alpha = A_\alpha$. But then $A \cap M_\alpha$ is not coded into $\calM_{\alpha+1}$ by our construction contradicting the assumption that $A$ is a class.
\end{proof}

The sequence above $\langle \calM_\alpha \; | \; \alpha < \omega_1\rangle$ is commonly called a {\em continuous, end-extensional filtration}. For short we will refer to such a sequence as simply a {\em filtration}.\footnote{Without the extra qualifiers this is not entirely standard, but this is the only type of filtration we will consider in this paper so no confusion will arise.}
\begin{definition}
A {\em filtration} is an $\omega_1$-length sequence $\langle M_\alpha \; |  \; \alpha < \omega_1\rangle$ of countable models so that for $\alpha < \beta$, $M_\alpha \prec_{end} M_\beta$ and for limit ordinals $\lambda < \omega_1$ $M_\lambda = \bigcup_{\xi < \lambda} M_\xi$. The filtration is said to be {\em recursively saturated} if every $M_\alpha$ is recursively saturated.
\end{definition}

We will need the notion of a (partial amenable) satisfaction class for models of arithmetic and set theory. This idea has generated an enormous amount of research and is central in the study of models of $\PA$ (and, to a lesser extent models of $\ZF$). We will only need a few facts, which we cherry pick below, and refer the reader to the excellent monograph \cite{MOPAKS} for more details. Unfortunately the definitions of $\PA$ and $\ZF$ are different enough that they have to be handled individually. We first present the more well-known case of $\PA$ and discuss its augmentation for $\ZF$. 

First, let us define an inductive, partial satisfaction class for a model of $\PA$. The definition we give, which comes from \cite{Sch81}, is not standard but it's easily seen that a model has a partial inductive satisfaction class in the sense below if and only if it has one in the sense defined e.g. in \cite[Definition 1.9.1]{MOPAKS}. Recall that for each standard $n < \omega$ there is (provably, in $\PA$ and $\ZF$) a $\Sigma_n$ formula $Tr_n(x, y)$ so that for all $\Sigma_n$ formulas $\varphi(z)$ $\PA \proves \forall y [\varphi(y) \leftrightarrow Tr_n (\varphi, y)]$. Given a model $\calM \models \PA$ let $W_n^M$ denote the set of pairs $(\varphi, a)$ so that $\varphi(x)$ is a $\Sigma_n$ formula with one free variable from the point of view of $\calM$ and $\calM \models Tr_n(\varphi, a)$ i.e. $\calM$ thinks that $a$ satisfies $\varphi$. 
\begin{definition}
Let $\calM$ be a model of $\PA$. A set $S \subseteq M^2$  is called a {\em partial inductive satisfaction class} if
\begin{enumerate}
\item
For all $x \in M$ $S_x := \{y \; | \; \langle x, y \rangle \in S\}$ is a set of pairs $(\varphi, a)$ so that $\varphi$ is a formula from the point of view of $\calM$ and $a \in M$.
\item
For all $n < \omega$ we have $S_n = W_n$.
\item
$(M, S)$ satisfies the induction scheme in the language expanded with a predicate for $S$.
\end{enumerate}
\label{satclass}
\end{definition}
Partial inductive satisfaction classes are the only types of satisfaction classes that will be discussed in this paper so we drop the qualifiers and refer to them simply as ``satisfaction classes". Note that the definition above is unchanged if we fix a nonstandard $a \in \calM$ and insist that for every $b \geq a$ the set $S_b = \emptyset$.

As mentioned above $\kappa$-Kaufmann models can be seen as a witness to incompactness at a cardinal $\kappa$. Schmerl has formalized this in the following striking way. 
\begin{theorem}[Schmerl, Theorem 3 of \cite{Sch81} ]
If there is a $\kappa$-Kaufmann model, then there is a $\kappa$-Aronszajn tree.

\label{schmerl} 
\end{theorem}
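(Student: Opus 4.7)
The plan is to build a $\kappa$-Aronszajn tree $T$ out of the Kaufmann model $\mathcal{M}$ so that cofinal branches of $T$ would correspond to non-definable classes of $\mathcal{M}$; rather classlessness then excludes any such branch, giving the Aronszajn property.

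First, fix a $\kappa$-Kaufmann model $\mathcal{M}$ and build a continuous, elementary, end-extensional filtration $\langle \mathcal{M}_\alpha \mid \alpha < \kappa \rangle$ with $|\mathcal{M}_\alpha| < \kappa$ and $\mathcal{M} = \bigcup_\alpha \mathcal{M}_\alpha$. This is possible because $\mathcal{M}$ is $\kappa$-like and $\PA$ admits definable Skolem functions, so Skolem hulls of $<\kappa$-sized initial segments remain of size $<\kappa$; the construction generalizes the countable filtration used in Theorem \ref{ksthm}.

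For the tree $T$, take the level-$\alpha$ nodes to be labeled local approximations living inside $\mathcal{M}_\alpha$: concretely, pairs $(b,w)$ where $b \in M_\alpha$ codes a candidate approximation $s_b \cap M_\alpha$ to a class of $\mathcal{M}$ (with $s_b := \{x : \mathcal{M} \models x \in b\}$), and $w$ is auxiliary data attached to $\mathcal{M}_\alpha$---say a coded fragment of a partial satisfaction class on $\mathcal{M}_\alpha$ up to some nonstandard depth, or the type over $\mathcal{M}_\alpha$ of an element of $M \setminus M_\alpha$---that certifies non-triviality of the approximation. Order $(b,w)$ at level $\alpha$ below $(c,w')$ at level $\beta > \alpha$ when $s_b$ and $s_c$ agree on $M_\alpha$ and $w$ is a coherent restriction of $w'$. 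Since there are at most $|M_\alpha| < \kappa$ such labeled codes, every level of $T$ has size $<\kappa$, and recursive saturation of each $\mathcal{M}_\alpha$ supplies a node at every level, giving $T$ height $\kappa$. A cofinal branch $\langle (b_\alpha, w_\alpha) \mid \alpha < \kappa \rangle$ would assemble to a subset $A := \bigcup_\alpha (s_{b_\alpha} \cap M_\alpha) \subseteq M$ whose intersection with each $a \in M$ is definable in $\mathcal{M}$ (since $a \in M_\alpha$ for some $\alpha$ and $s_{b_\alpha} \cap [0,a]$ is definable from $b_\alpha$ and $a$), making $A$ a class of $\mathcal{M}$. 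Rather classlessness then forces $A$ to be definable in $\mathcal{M}$, while the accumulated witnesses $w_\alpha$ force the branch to code a global object whose existence is likewise incompatible with rather classlessness (for instance a satisfaction class on $\mathcal{M}$, which is precluded by the Proposition in the introduction); this gives the required contradiction.

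The main obstacle is the calibration of the auxiliary data $w$ attached to each node. The witnesses must be tight enough that a globally definable $A$ cannot match them along a cofinal chain---otherwise trivial branches (e.g. those corresponding to $A = M$ or to any coded subset) sneak in---yet loose enough that every level of $T$ has size $<\kappa$ and every level is populated. Balancing these competing demands is the technical heart of the argument; recursive saturation of each $\mathcal{M}_\alpha$ plays an essential role, furnishing a rich but parameterizable family of coded local approximations with built-in non-triviality. Once the tree is set up so that these three conditions hold simultaneously, the Aronszajn property follows immediately from the branch-killing argument sketched above.
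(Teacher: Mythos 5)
There is a genuine gap here, and you have in fact flagged it yourself: the ``calibration of the auxiliary data $w$'' that you defer as ``the technical heart of the argument'' is precisely the content of the proof, and your proposal does not supply it. Without a concrete specification of the nodes, nothing rules out the trivial cofinal branches you worry about (e.g.\ the one corresponding to $A=M$), and the claimed contradiction at the end --- that ``the accumulated witnesses $w_\alpha$ force the branch to code a global object whose existence is incompatible with rather classlessness'' --- is an assertion, not an argument. Note also that rather classlessness alone does not kill branches whose union is a class: it only makes that class \emph{definable}. You need the branch to assemble into something that \emph{cannot} be definable, and your setup never guarantees this.

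The missing idea, which is the whole point of Schmerl's construction, is to take the nodes to be the ``attempts to build a partial inductive satisfaction class'': fix a nonstandard $a\in M$ and let a node be an element $b\in M$ coding a $d\times a$ binary matrix whose $n$-th column, for each standard $n<\omega$, is an $\mathcal{M}$-finite initial segment of the complete $\Sigma_n$-set $W_n$ as computed in $\mathcal{M}$; the order is end-extension of matrices. This single choice simultaneously satisfies all three of your competing demands: levels have size $<\kappa$ because there are at most $2^{d\times a}$ (computed in $\mathcal{M}$) such codes and $\mathcal{M}$ is $\kappa$-like; every level is populated because the requirement on the columns is a recursive, finitely consistent type, realized by recursive saturation; and a cofinal branch assembles into a partial inductive satisfaction class, which is a class (its initial segments are coded) but can never be definable by Tarski's theorem, contradicting rather classlessness. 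No filtration of $\mathcal{M}$ is needed --- the tree is an $\mathcal{M}$-tree ranked by elements of $M$, and one passes to a cofinal well-founded subset at the end to get a genuine $\kappa$-Aronszajn tree. Your outline has the right shape (branches $\leftrightarrow$ forbidden global objects), but as written it is a plan for a proof rather than a proof.
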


Roughly speaking the tree is the ``tree of attempts to build a satisfaction class".

\begin{proof}
Let $\calM$ be a $\kappa$-Kaufmann model. We will define a subset $T \subseteq M$ and a tree-like order on $T$ so that the levels of $T$ are indexed by the elements of $\calM$, $T$ has sequences of every order type in $\calM$, the set of such sequences in a given order type has size less than $\kappa$, and there is no subset $B \subseteq T$ in order type $\leq_M$. Clearly then any cofinal, well-founded subset of this \say{tree} will be a $\kappa$-Aronszajn tree. 

Fix $a \in M$ non-standard. Let $W_n$ denote the complete $\Sigma_n$-set (as defined in $\calM$), which we think of an an $\calM$-indexed list of $0$'s and $1$'s corresponding to its characteristic function on the set of pairs consisting of $\Sigma_n$ formulas and elements of $M$ (using some standard pairing function). The tree $T$ is the set of $b \in M$ so that there is a $d \in M$ and $b$ codes a $d \times a$ sized matrix whose entries are $0$ or $1$ and for which for each natural number $n < \omega$ the $n^{\rm th}$-column of $b$ is an $\calM$-finite initial segment of $W_n$. For elements $b_0, b_1 \in T$ coding matrices of size $d_0 \times a$ and $d_1 \times a$ respectively we let $b_0 \sqsubseteq_T b_1$ if $d_0 < d_1$ and $b_0 = b_1 \hook (d_0 \times a)$. In words, $b_0$ is below $b_1$ if and only if $b_1$ codes a larger matrix whose restriction to the coordinates $(d_0 \times a)$ is $b_0$ (end extend each column). This is clearly a tree like order, it remains to see that it forms a tree as described in the first paragraph.

First let's see that the levels have size ${<}\kappa$. Let $T_d :=\{b \in T \; | \; b \; {\rm codes \; a \; binary \; matrix \; of \; size} \; d \times a\}$. Then since $b \in T_d$ implies $b \in M$ and codes a sequence of size $d \times a$ there are at most $2^{d \times a}$ elements of $T_d$ (as computed in $\calM$) so by $\kappa$-likeness $|T_d| < \kappa$. 

Now lets see that the tree has height $\kappa$. This follows immediately by recursive saturation. For each $b \in T$ and $i < a$ let $b_n$ denote the $n^{\rm th}$ column of the matrix coded by $b$. For any $d \in M$ consider the type $p_d(x) := \{\exists e > d \; ( x {\rm \; codes \; a \; matrix \; of \; size \;} e \times a)\} \cup \{ x_n \subseteq W_n \; | \; n < \omega\}$. Clearly this is a finitely consistent, recursive type so it has a realization in $\calM$. But any such realization is an element of height greater than $d$.

Finally there is no cofinal branch. This follows by rather classlessness: from any cofinal branch we could define a satisfaction class by the definition of the tree, but since any satisfaction class is undefinable this can't exist. See \cite[Lemma 4.1]{Sch81} for a more detailed discussion of this last point. Note that if $\kappa$ is an uncountable regular cardinal then any class is inductive, see \cite[pp. ~258-259]{MOPAKS}.
\end{proof}

As Schmerl notes, what the proof above shows is that if $\kappa$ has the tree property then every $\kappa$-like recursively saturated model has a satisfaction class. Regardless of the properties of the order type of $\calM$, the proof shows that given any recursively saturated model $\calM$, there is an associated tree $T^M_{sat}$ whose levels are cofinal in the model. Moreover, if $\calM$ is $\kappa$-like for some regular $\kappa$ then $T^M_{sat}$ has a cofinal branch if and only if $\calM$ has a satisfaction class. We will call such a tree the {\em satisfaction tree for} $\calM$ (relative to $a$). 

Now let us handle the case of models of $\ZF$. For any defined term $t$ in $\ZF$, if $\calM \models \ZF$ let $t^\calM$ denote the corresponding term in $\ZF$ e.g. $\omega^\calM$, $V_\alpha^\calM$ etc. Theorem \ref{schmerl} holds almost verbatim for models of $\ZF$, however this seems to be folklore and we could not find a proof so we write out the details here. First we need an augmentation of Definition \ref{satclass}. Recall that if $\calM \models \ZF$ then for any integer $n \in \omega^\calM$ we can define in the model a truth predicate for $\Sigma_n$-truth i.e. the class $\{(\varphi(x), a) \; | \; \varphi(x)$ is $\Sigma_n$ and $\calM \models$``$\varphi(a)$ holds"$\}$. Let us call this class $W_n^M$ in analogy with the arithmetic case. If $\alpha \in M$ is an ordinal in $M$ then let $W_n^M \hook \alpha$ consist of the set of all $(\varphi(x), a) \in W_n^M$ so that $a \in V_\alpha^\calM$. Note that this is a set from the point of view of $\calM$.

\begin{definition}
Let $\calM \models \ZF$. A subset $S \subseteq \omega^\calM \times M$ is a {\em partial amenable satisfaction class} if the following hold.
\begin{enumerate}
\item
For all $x \in M$ $S_x := \{y \; | \; \langle x, y\rangle \in S\}$ is a set of pairs $(\varphi, a)$ so that $\varphi$ is a formula from the point of view of $\calM$ and $a \in M$.
\item
For all standard $n < \omega$ we have $S_n = W^M_n$.
\item
$(M, S)$ satisfies the replacement and comprehension schemes in the language expanded with a predicate for $S$.
\end{enumerate}

\end{definition}
Note that by a diagonal argument similar to the classic one used in Tarski's undefinability of truth no partial amenable satisfaction class can be definable. As in the case of $\PA$ this definition is not standard. However, in the terminology of \cite[Definition 2.2 d)]{Enayat21}, if $\calM$ is $\omega$-nonstandard then for any $a \in \omega^\calM$ nonstandard if $S$ is an $a$-satisfaction class, then the set $\{(k, \varphi(x), a)\; | \; \varphi(a) \in S \; {\rm is} \; \Sigma_k\}$ is a partial amenable satisfaction class in our sense and, conversely, by overspill, if $S$ is a partial amenable satisfaction class as defined above then there is a nonstandard $a \in \omega^\calM$ so that $\bigcup_{b < a} S_b$ is an $a$-satisfaction class in the sense of \cite[Definition 2.2, d)]{Enayat21}. Therefore, for $\omega$-nonstandard models the existence of a partial amenable satisfaction class is equivalent to the existence of an $a$-satisfaction class for some nonstandard $a$.

Let us now explain how to define the analogue of $T^M_{sat}$ for a model of $\ZF$. The construction of the tree is enough to imply that Schmerl's Theorem \ref{schmerl} holds for models of $\ZF$. In any case it is the construction of the tree that we will need moving forward.

Let $\calM \models \ZF$ and for each $n \in \omega^\calM$, let $\Sigma_n^\calM$ be the set of $\Sigma_n$-formulas of $\calL_{\ZF}$ as defined in $\calM$. In an abuse of notation, for each infinite $\alpha \in ON^\calM$ and standard $n < \omega$ let us associate $W^M_n\hook \alpha$ with its characteristic function $\chi_n:\Sigma_n^\calM \times V_\alpha^\calM \to 2$. Now, for each ordinal $\alpha$ in $\calM$ let $T_\alpha^M$ consist of all $t \in M$ so that $t \in \omega^\calM \times M^2$ so that for all $n \in \omega^\calM$ standard we have that $t_n = W^M_n\hook \alpha$ and for all $n \in \omega^\calM$ we have that $t_n$ is a function mapping $\Sigma^\calM_n \times V_\alpha^\calM \to 2$. Let $T^M_{sat} = \bigcup_{\alpha \in ON^\calM} T_\alpha^M$. If $t, s \in T^M_{sat}$ we let $t\leq_{sat} s$ if and only if $t \in T^M_\alpha$, $s\in T^M_\beta$ with $\alpha < \beta \in ON^\calM$ and for each $n \in \omega^\calM$ we have that $s_n \hook \Sigma_n^\calM \times V_\alpha = t_n$. In other words, in each column, the restriction of $s$ to parameters in $V^\calM_\alpha$ is exactly $t$. Clearly this is a tree like order with a ranking function in the ordinals of $\calM$.

Now an essentially verbatim proof to the one given for Theorem \ref{schmerl} shows that if $\calM$ is recursively saturated then $T^M_{\alpha}$ is non-empty for every $\alpha \in ON^\calM$, if $\calM$ is $\kappa$-like for some regular cardinal $\kappa$ then $|T^M_{\alpha}| < \kappa$ for each $\alpha \in ON^\calM$ and any cofinal branch through $T^M_{sat}$ codes a partial amenable satisfaction class for $\calM$ so if $\calM$ is rather classless then $T^M_{sat}$ has no cofinal branch. In total, if $\calM$ is a $\kappa$-Kaufmann model then $T^M_{sat}$ is a $\kappa$-Aronszajn tree.

The tree $T^M_{sat}$ will be discussed in Sections 4 and 5. There it will not matter whether we are discussing models of $\PA$ or $\ZF$ since, in light of the above discussion these ideas can be defined equally for both. Therefore, given a Kaufmann model $\calM$ of either $\PA$ or $\ZF$ we will define $T^M_{sat}$ to mean the corresponding tree depending on the theory without much further comment.

Fix a Kaufmann model $\calM$ of any tree-like theory. Suppose $\mathbb P$ is a forcing notion, when does $\forces_\mathbb P ``\check{\calM}$ is not Kaufmann" ? Obviously, if $\mathbb P$ collapses $\aleph_1$ to be countable, then $\omega_1$-likeness is killed. What about if $\mathbb P$ does not collapse $\omega_1$? This motivates the following definition.
\begin{definition}
A Kaufmann model $\calM$ is {\em destructible} if there is an $\omega_1$-preserving forcing notion $\mathbb P$ so that $\forces_\mathbb P ``\check{\calM}$ is not Kaufmann".
\end{definition}

In this language, an immediate corollary of Main Theorem \ref{mainthm1} is the following.
\begin{corollary}
The existence of destructible Kaufmann models for $\PA$ and $\ZF$ is independent of $\ZFC$.
\end{corollary}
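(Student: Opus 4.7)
The plan is to read off both directions of the independence directly from Main Theorem \ref{mainthm1}, whose two parts have been engineered to supply exactly the two consistency statements required. Since both $\MA_{\aleph_1}$ and $\diamondsuit$ are relatively consistent with $\ZFC$ (for example, the former by a standard finite-support ccc iteration of length $\omega_2$, the latter since $\diamondsuit$ holds in $L$), it suffices to work in models of each in turn.

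First I would establish the consistency of ``no destructible Kaufmann model exists''. Work in a model of $\ZFC + \MA_{\aleph_1}$; Kaufmann models exist there by Shelah's absoluteness result recalled in the introduction. If $\calM$ is any Kaufmann model and $\mathbb P$ is any forcing with $\forces_{\mathbb P}$``$\check{\calM}$ is not Kaufmann'', Part 1 of Main Theorem \ref{mainthm1} immediately yields that $\mathbb P$ collapses $\omega_1$. Hence no Kaufmann model is $\omega_1$-preservingly destructible, which is exactly what is required.

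For the other direction I would work in a model of $\ZFC + \diamondsuit$. Part 2 of Main Theorem \ref{mainthm1} directly produces a Kaufmann model $\calM$ together with a Souslin tree $S$ such that forcing with $S$ adds a satisfaction class to $\calM$. Since $S$ is ccc (being a Souslin tree) it preserves $\omega_1$. In the generic extension the added satisfaction class is a genuine class of $\calM$ (its restriction to any proper initial segment $[0,a]$ is definable in $\calM$ via the formulas $Tr_n$ of the appropriate complexity), but cannot be definable in $\calM$ by Tarski's theorem on the undefinability of truth. Thus $\calM$ fails rather classlessness in $V[G]$, so fails to be Kaufmann there, and $\calM$ is destructible in $V$ as witnessed by $S$.

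The only point that needs any care beyond invoking Main Theorem \ref{mainthm1} is the final one in the $\diamondsuit$ case, namely that the generically added satisfaction class really witnesses failure of rather classlessness in $V[G]$ in the sense of the paper's definition. This is essentially the content of Proposition \ref{mainthm1} (sic: Proposition 1.1) lifted into $V[G]$: initial segments of any satisfaction class are definable in $\calM$, while satisfaction classes themselves are never definable by Tarski. With that sanity check in hand, there is no real obstacle remaining --- all the heavy lifting lives in the proofs of the two parts of Main Theorem \ref{mainthm1}, and the corollary is merely their juxtaposition.
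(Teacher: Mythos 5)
Your proof is correct and follows essentially the same route as the paper, which presents the corollary as an immediate juxtaposition of the two parts of Main Theorem \ref{mainthm1} under the respective consistent hypotheses $\MA_{\aleph_1}$ and $\diamondsuit$. Your extra sanity check that the generic branch yields an undefinable class (initial segments coded in $\calM$, full class undefinable by Tarski) is exactly the observation the paper makes when justifying the ``hence'' in Theorem \ref{diamondthm}.
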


 Before ending this section, let us make one observation about destructibility of Kaufmann models that will guide the rest of the paper. Suppose $\calM$ is a Kaufmann model of $\PA$ or $\ZF$ and $\mathbb P$ is an $\omega_1$-preserving forcing notion. Then in $V^\mathbb P$ $\calM$ is still $\omega_1$-like, and by absoluteness, there cannot be any new recursive types, so $\calM$ is still recursively saturated. Therefore, if $\mathbb P$ kills the Kaufmann-ness of $\calM$ it's because it added an undefinable class. This is what we will use to kill Kaufmann models.

Finally, let us note that some similar ideas to those presented here were previously explored by Enayat in \cite{Enayatstandardmodel}. In particular, in Theorem 4.2 of that paper Enayat observes that there are rather classless models of $\ZFC^- + V = H_{\aleph_1}$ which remain rather classless in any forcing extension preserving $\omega_1$. Thus in the language of this paper Enayat shows that there is always an indestructible model of $\ZFC^- + V = H_{\aleph_1}$.

\section{Killing Destructible Kaufmann Models}
In this section we prove the first part of Main Theorem \ref{mainthm1}. Specifically we show the following, which is much more general.

\begin{theorem}
Assume $\MAone$. Let $\calA = \langle A, T^A, \leq_T, O^A, \leq_O, r, P_1, ...\rangle$ be a tree-like model which is $\omega_1$-like and rather branchless. If $\P$ is a forcing notion so that $\forces_\P$``$\check{\calA}$ is not rather branchless" then $\P$ collapses $\omega_1$.

In particular there are no destructible Kaufmann models of any tree-like theory satisfying the collection scheme.
\label{MAthm}
\end{theorem}
For the ``in particular" part in the case of $\PA$ or $\ZF$ note that since forcing cannot add elements to old models, adding a class to a model $\calM$ of either $\PA$ or $\ZF$ is equivalent to adding a cofinal branch to $T^M_{\PA}$ or $T^M_{\ZF}$.

The rest of this section is devoted to proving Theorem \ref{MAthm}. Fix an $\omega_1$-like, rather branchless tree-like model $\calA = \langle A, T^A, \leq_T, O^A, \leq_O, r, P_1, ...\rangle$ for the rest of the section. Note that since $\calA$ is $\omega_1$-like, $T^A$ has uncountably many levels but each level is countable. We will show that in any forcing extension if $\calA$ has a new class, then $\omega_1$ is collapsed. To begin we need a few more definitions about trees.


\begin{definition}
Let $T= \langle T, \leq_T\rangle$ be a tree-like order.
\begin{enumerate}
\item
If $T$ is $\omega_1$-like we say that $T$ is {\em Aronszajn} if it has no uncountable, linearly ordered subset. 
\item
We say that $T$ is {\em special} if there is a function $f:T \to \omega$ so that if $x\leq_T y$ then $f(x) \neq f(y)$. 
\item
We say that $T$ is {\em weakly special} if there is a function $f:T \to \omega$ so that if $x \leq_T y, z$ and $f(x) = f(y) = f(z)$ then $y$ and $z$ are comparable in the $\leq_T$ ordering.
\end{enumerate}
\end{definition}

The application of $\MAone$ needed to prove Theorem \ref{MAthm} is the following fact, due to Baumgartner, Malitz and Reinhardt.

\begin{fact}[Theorem 4 of \cite{BMR70}]
Assume $\MAone$. Let $T$ be an $\omega_1$-like tree-like partial order of size $\aleph_1$. If $T$ is Aronszajn then $T$ is special.
\label{specialma}
\end{fact}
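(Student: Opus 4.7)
The plan is to apply $\MAone$ to the canonical specializing poset for $T$. Let $\mathbb{P}$ consist of all finite partial functions $p: T \to \omega$ satisfying $p(x) \neq p(y)$ whenever $x, y \in \dom(p)$ and $x <_T y$, ordered by reverse inclusion. For each $t \in T$ the set $D_t := \{p \in \mathbb{P} : t \in \dom(p)\}$ is dense: given $q \in \mathbb{P}$ with $t \notin \dom(q)$, only finitely many values appear on elements of $\dom(q)$ that are $<_T$-comparable with $t$, so extend $q$ by assigning $t$ a natural number outside this finite set. Since $|T| = \aleph_1$, there are $\aleph_1$-many such dense sets, and any filter meeting all of them yields, via its union, a specializing function $T \to \omega$.

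The crux is verifying that $\mathbb{P}$ is ccc so that $\MAone$ applies. Given any uncountable family $\{p_\alpha : \alpha < \omega_1\} \subseteq \mathbb{P}$, the $\Delta$-system lemma provides an uncountable refinement on which the sets $\dom(p_\alpha)$ form a $\Delta$-system of common size $n + |R|$ with root $R$. Further pigeonholing, using that only countably many color-tuples are possible, yields a refinement on which each $p_\alpha$ agrees with a fixed $r$ on $R$, and, writing $\dom(p_\alpha) \setminus R = \{t^\alpha_0, \ldots, t^\alpha_{n-1}\}$ in some canonical enumeration, the color sequence $(p_\alpha(t^\alpha_0), \ldots, p_\alpha(t^\alpha_{n-1})) = (c_0, \ldots, c_{n-1})$ is independent of $\alpha$. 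Under this reduction, incompatibility of $p_\alpha$ and $p_\beta$ must be witnessed by indices $(i, j)$ with $c_i = c_j$ and $t^\alpha_i$, $t^\beta_j$ $<_T$-comparable.

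The Aronszajn hypothesis is now deployed via the classical Baumgartner-Malitz-Reinhardt combinatorial argument: since every $<_T$-chain in $T$ is countable, the set of $<_T$-predecessors of any element of $T$ is a countable chain, and a further Ramsey-style pigeonhole over the finite witness pattern $(i, j)$ locates $\alpha < \beta$ in the refined family for which no such witness exists, so $p_\alpha \cup p_\beta \in \mathbb{P}$ is a common extension, contradicting the antichain assumption. With ccc established, $\MAone$ delivers a filter $G \subseteq \mathbb{P}$ meeting each of the $\aleph_1$-many dense sets $D_t$, and $\bigcup G$ is the desired specializing function $T \to \omega$.

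The main obstacle is this last Ramsey step: turning the Aronszajn hypothesis into the existence of two compatible conditions in the refined family is the substantive combinatorial content of Baumgartner-Malitz-Reinhardt and is the only place the Aronszajn property is essentially used; the density of the sets $D_t$ and the final appeal to $\MAone$ are routine. A minor additional subtlety is that the hypothesis allows $T$ to be merely an $\omega_1$-like tree-like partial order of size $\aleph_1$ rather than a standard $\omega_1$-tree, but the whole argument refers only to the relation $<_T$ and to cardinalities of chains and predecessor sets, so this generalization introduces no new difficulty.
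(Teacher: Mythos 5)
First, a point of comparison: the paper does not prove this statement at all --- it is quoted as a Fact (Theorem 4 of Baumgartner--Malitz--Reinhardt) and used as a black box, so there is no internal proof to measure you against. Your proposal follows the standard route one would take to prove it from scratch (specializing poset, ccc, then $\MAone$), and the scaffolding is correct: the poset of finite partial specializing functions, the density of the sets $D_t$, the $\Delta$-system refinement with pigeonholing on the root and on the color pattern, and the observation that after this reduction incompatibility must be witnessed by a comparable pair of same-colored points outside the root are all fine.

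The gap is that the one step carrying all of the mathematical content is asserted rather than proved. You write that ``a further Ramsey-style pigeonhole over the finite witness pattern $(i,j)$ locates $\alpha<\beta$ \dots for which no such witness exists,'' and then acknowledge that this step ``is the substantive combinatorial content of Baumgartner--Malitz--Reinhardt.'' That is circular: what is needed at exactly this point is the nontrivial lemma of that paper, namely that if $T$ has no uncountable chain and $\{F_\alpha : \alpha<\omega_1\}$ is an uncountable family of pairwise disjoint $n$-element subsets of $T$, then some two of them are totally incomparable (every element of one is $<_T$-incomparable with every element of the other). No pigeonhole over the finitely many patterns $(i,j)$ yields this: after refining so that every pair of conditions is incompatible via the \emph{same} pattern $(i,j)$, one still has to extract an uncountable chain from that configuration, and that extraction is an induction on $n$ whose base case $n=1$ is exactly ``an uncountable pairwise-comparable family is an uncountable chain'' and whose inductive step is genuinely delicate. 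Until that lemma is stated and proved, the ccc claim --- and hence the theorem --- is not established. A secondary issue: for a tree-like partial order that need not be well-founded you cannot literally press down on levels, so the lemma must be formulated purely in terms of $<_T$, or one must first pass to the well-founded cofinal subtree guaranteed by the paper's definition of $\omega_1$-like and check that specializing that subtree suffices; your closing remark that the generalization ``introduces no new difficulty'' is only true once the argument is actually written in one of those forms.
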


We also need the following, well known fact.
\begin{lemma}
Suppose $T$ is an $\omega_1$-like tree-like order. If $T$ is weakly special then any forcing adding a cofinal branch collapses $\omega_1$.
\label{coll}
\end{lemma}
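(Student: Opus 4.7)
The plan is to proceed by contradiction: suppose $\mathbb{P}$ is a forcing notion preserving $\omega_1$ such that in $V^{\mathbb{P}}$ there is a cofinal branch $b$ through $T$ that does not belong to $V$. Let $f : T \to \omega$ witness that $T$ is weakly special.

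First I would observe that $b$ is well-ordered by $\leq_T$ of order type $\omega_1$, being a cofinal branch through an $\omega_1$-like tree. Pigeonhole applied to $f \restriction b : b \to \omega$ then yields an $n < \omega$ such that $B := b \cap f^{-1}(n)$ is uncountable in $V^{\mathbb{P}}$. Next, fix $x_0$ to be the $\leq_T$-minimum of $B$ and consider the ground-model set $U := \{y \in T : x_0 \leq_T y \text{ and } f(y) = n\}$. Weak specialness, applied with common lower bound $x_0$, forces any two elements of $U$ to be $\leq_T$-comparable, so $U$ is a chain in $T$. Since $B \setminus \{y : y <_T x_0\} \subseteq U$ is uncountable in $V^{\mathbb{P}}$ and $\omega_1$ is preserved by hypothesis, $U$ is uncountable already in $V$.

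The key step is then to show that the ground-model chain $U$ recovers the forced branch $b$. In $V$, let $c$ be the $\leq_T$-downward closure of $U$; a standard argument for trees shows $c$ is a chain, and $c$ is cofinal because $U$ is uncountable in an $\omega_1$-like tree. To see $U \subseteq b$, take any $y \in U$; since $B$ is uncountable and the levels of $T$ are small, choose $y' \in B$ with $y' \geq_T x_0$ and rank strictly greater than that of $y$. Weak specialness makes $y$ and $y'$ comparable in $U$, and comparison of ranks forces $y \leq_T y'$; since $y' \in b$ and any maximal chain is downward closed, $y \in b$. Hence $c \subseteq b$, and an analogous rank argument shows that $c$ is itself a maximal chain in $T$, so $c = b$. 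But $c \in V$, contradicting $b \notin V$.

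The conceptual heart of the argument, and the main obstacle to watch, is this reconstruction of the generic branch $b$ from the ground-model chain $U$: weak specialness is doing the essential work by ensuring $U$ is linearly ordered, which in turn makes its downward closure a ground-model cofinal branch forced to coincide with $b$. One should also verify carefully that pigeonhole can be applied to a name-side object (it can, as this is just a $\ZFC$-fact in $V^{\mathbb{P}}$) and that ``uncountable'' transfers between $V$ and $V^{\mathbb{P}}$ for $V$-sets, which uses the preservation of $\omega_1$ in an essential way.
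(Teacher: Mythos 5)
Your argument is correct in substance but takes a genuinely different route from the paper's. The paper works on the forcing side: it fixes a name $\dot{b}$ for a \emph{new} cofinal branch and shows, by the standard splitting argument (a condition deciding $x\in\dot{b}$ with $f(x)=n$ extends to two incompatible conditions putting incomparable $x_0,x_1\geq_T x$ with $f(x_0)=f(x_1)=n$ into $\dot{b}$, contradicting weak specialness), that each colour class $f^{-1}(\{n\})\cap b$ is bounded in $b$; hence $b$, a set of size $\aleph_1^V$, is covered by countably many countable sets and $\omega_1$ must be collapsed. You instead argue the contrapositive semantically in $V[G]$: assuming $\omega_1$ is preserved, you use the colouring to extract the ground-model set $U=\{y \; | \; x_0\leq_T y \text{ and } f(y)=n\}$, which weak specialness linearises, and then reconstruct $b$ as the downward closure of $U$, so $b\in V$ and no new branch was added. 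Both are sound; yours yields the equivalent but cleaner-sounding formulation ``weakly special trees acquire no new cofinal branches in $\omega_1$-preserving extensions'' without manipulating names and conditions, while the paper's proof exhibits the collapsing mechanism directly. One small repair is needed: for the trees this lemma is actually applied to ($\calM$-trees such as $T^M_{fin}$) a cofinal branch is \emph{not} well-ordered --- its order type is that of the $\omega_1$-like but ill-founded model $\calM$ --- so ``the $\leq_T$-minimum of $B$'' need not exist. Take $x_0$ to be an arbitrary element of $B$ instead; since each node has only countably many $\leq_T$-predecessors, all but countably many elements of $B$ lie above $x_0$, and the rest of your argument (including the appeals to ``rank,'' which should be read as the level of a node) goes through unchanged.
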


\begin{proof}
Suppose $f:T \to \omega$ witnesses that $T$ is weakly special, $\mathbb P$ is a forcing notion and $\forces_\mathbb P `` \dot{b} \subseteq \check{T}$ is a new, cofinal branch". Let $G \subseteq \mathbb P$ be generic over $V$ and let $b = \dot{b}_G$. We claim that (in the extension) for each $n < \omega$ the set $f^{-1}(\{n\}) \cap b$ is bounded. Note that this implies the lemma since we will have that $b$, which is a set of size $\aleph_1^V$ can be covered by countably many countable sets.

To see the claim, suppose for some $n < \omega$ we have that $p \forces \check{f}^{-1}``(\{\check{n}\}) \cap \dot{b}$ is unbounded". By strengthening if necessary, we may assume that $p$ decides some $x \in T$ is in $\dot{b}$ and $f(x) = n$. Now since $\dot{b}$ is forced to be new there are incompatible extensions $p_0$ and $p_1$ of $p$ and incompatible elements $x_0$ and $x_1$ extending $x$ so that for $i < 2$ $p_i \forces x_i \in \dot{b}$ and $f(x_i) = n$. But this contradicts the defining property of $f$. 
\end{proof}


\begin{lemma}
$\MA_{\aleph_1}$ implies that $T^A$ is weakly special.
\label{finws}
\end{lemma}

The proof of this lemma uses the fact that if the conclusion of Fact \ref{specialma} holds then any tree of cardinality $\aleph_1$ with at most $\aleph_1$ many uncountable branches is weakly special. For (well-founded) trees, this result is well known, see \cite[Corollary 7.8]{BaumPFA}. The proof goes through verbatim for ranked trees $T^A$ which appear as the tree-like order of an $\omega_1$-like tree-like model $\calA$, but we give the details below for the sake of completeness, as well as to present the proof to model theorists who may not be as familiar with these ideas as set theorists.

\begin{proof}
Since $\calA$ is rather branchless $T^A$ has only $\aleph_1$-many branches. Enumerate all the uncountable branches by $\mathcal B = \{b_\alpha \; | \; \alpha < \omega_1\}$. Fix an injection $g: \mathcal B \to T^A$ so that for each $\alpha$ $g(b_\alpha) \in b_\alpha$. By \cite[Lemma 7.6]{BaumPFA}, one can choose $g$ so that whenever $g(b_\alpha) <_{fin} g(b_\beta)$ then $g(b_\beta) \notin b_\alpha$. Now let $S = \{t \in T^A \; | \; \forall b \in \mathcal B \, {\rm if} \; t \in b \, {\rm then} \; t \leq_{T} g(b)\}$. This is a tree-like order with the order inherited from $T^A$. Moreover, it's uncountable since it contains the range of $g$. It has no uncountable branches. To see this, towards a contradiction, suppose that $b$ were an uncountable branch through $S$. Let $\bar{b} = \{t \in T \; | \; \exists s \in b \; t <_{T} s\}$ i.e. the downward closure of $b$ in $T^A$. This must be an uncountable branch through $T$. But then since $g(\bar{b}) \in \bar{b}$ we get an $s \in b$ with $g(\bar{b}) \leq_{T} s$ contradicting the definition of $S$.

Applying Fact \ref{specialma}, $\MA_{\aleph_1}$ implies that $S$ is special. Let $f:S \to \omega$ be such a specializing function. Let $t\in T^A \setminus S$. We extend $f$ to include $t$ as follows. Since $t \notin S$ there is a branch $b$ so that $t \in b$ but $g(b) \leq_{T} t$. This branch is unique: If $g(b_\alpha) <_{T} g(b_\beta) <_{T} t$ with $t\in b_\alpha \cap b_\beta$ then in particular $g(b_\beta) \in b_\alpha$ which contradicts the choice of $g$. Now let $f(t) = f(g(b))$ for this unique branch $b$. 

\begin{claim}
$f:T \to \omega$ has the property that if $f(s) = f(t) = f(u)$ and $s \leq_T t, u$ then $t$ and $u$ are comparable, i.e. it witnesses that $T^A$ is weakly special.
\end{claim}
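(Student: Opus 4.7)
The plan is to prove the claim by case analysis on which of $s$, $t$, $u$ belong to $S$ and which don't, exploiting three facts: (i) $f \restriction S$ is a specializing function, so any two $\leq_T$-comparable elements of $S$ with the same $f$-value must coincide; (ii) for each $x \notin S$ the branch $b_x \in \mathcal{B}$ with $x \in b_x$ and $g(b_x) \leq_T x$ is unique, and by construction $f(x) = f(g(b_x))$; and (iii) the choice of $g$ guarantees that whenever $g(b_\alpha) <_T g(b_\beta)$ one has $g(b_\beta) \notin b_\alpha$.

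The overall strategy is to show that $s$, $t$, $u$ all lie on a common uncountable branch $b \in \mathcal{B}$, since then $t$ and $u$ are comparable simply because $b$ is a chain. To each of $s$, $t$, $u$ I would attach a \emph{witness} in $S$: if the element itself is in $S$ I take it to be the witness, and otherwise I take $g(b_x)$ for its unique associated branch. Since $f(s) = f(t) = f(u)$, all three witnesses live in $S$ and share one $f$-value. The inequalities $s \leq_T t$ and $s \leq_T u$, together with the fact that each witness lies $\leq_T$-below its element and that $T^M_{fin}$ is tree-like, force each pair of witnesses to be $\leq_T$-comparable (they all lie below $t$ or below $u$). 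Applying specialization on $S$ then collapses these witnesses to a single element $w \in S$, and the injectivity of $g$ collapses the corresponding branches to a single branch $b$.

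Once this reduction is in place, it remains to verify that $s$, $t$, and $u$ all sit inside that common branch $b$. For the elements that equal their witness this is immediate since $w = g(b) \in b$; for the elements $x \notin S$ whose witness is $g(b_x) = w$, one has $b_x = b$ by injectivity of $g$, so $x \in b$. Hence $\{s, t, u\} \subseteq b$ and $t, u$ are comparable, as required.

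The main obstacle I expect is the subcase $s \notin S$, where $s$ itself determines a branch $b_s$ that a priori could differ from $b_t$ or $b_u$. Here one must be careful that the uniqueness clause in the definition of $b_x$ really applies, and this is where property (iii) of $g$ enters decisively: if two branches $b_\alpha \neq b_\beta$ both passed through a single $x$ with $g(b_\alpha), g(b_\beta) \leq_T x$, then after comparing $g(b_\alpha)$ and $g(b_\beta)$ the strict inequality would contradict the placement of the larger $g$-value inside the smaller branch. This is precisely the obstruction handled by the construction of $g$ invoked from \cite[Lemma 7.6]{BaumPFA}, and it is what makes the case analysis close cleanly.
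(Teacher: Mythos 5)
Your proposal is correct and follows essentially the same route as the paper: both arguments use the specialness of $f$ on $S$ together with the Baumgartner-style coherence of $g$ to show that $s$, $t$, $u$ all lie on a single branch $b$ with $g(b)\leq_{fin}$ each of them, whence $t$ and $u$ are comparable. Your ``witness in $S$'' bookkeeping is a slightly tidier packaging of the paper's case analysis (and the parenthetical ``they all lie below $t$ or below $u$'' is imprecise for the pair $w_t,w_u$, but harmless since comparability of each with $w_s$ already forces all three witnesses to coincide).
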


\begin{proof}
Let $s \leq_{T} t, u$ be as in the claim. Since $f(t) = f(s)$ at least one of $t$ and $s$ is not in $S$ since $f$ is injective on chains in $S$. In fact neither $s$ nor $t$ are in $S$ unless $s = g(b)$ for some $b$. To see this, first note that if $s \in S$ then, since $t \notin S$ we would have that there is some $b$ so that $b$ is the unique branch with $t \in b$ and $g(b) \leq_{T} t$ and, since $s \in b$ as well and $s \in S$ we have that $s \leq_{T} g(b)$ and so either $s = g(b)$ or $f(s) \neq f(g(b)) = f(t)$ which is a contradiction. Similarly if $t \in S$ then since $s \notin S$ there is some branch $c$ so that $s \in c$ but $g(c) \leq_{T} s$ and since $g(c), t \in S$ and $g(c) <_{T} t$ we have that $f(g(c)) \neq t$ but this is a contradiction since $f(g(c)) = f(s) = f(t)$.

Now, let $b$ be the unique branch so that $t \in b$ and $g(b) \leq_{T} t$. As noted before, $s \in b$ as well. If $s <_{T} g(b)$ then there is a branch $c \neq b$ so that $s \in c$ and $g(c) \leq_T s$ (since either $s = g(c)$ or is above it, by the argument in the previous paragraph). But now $g(c), g(b) \in S$ and $g(c) <_{T} g(b)$ so $f(g(c)) \neq f(g(b))$ but this is a contradiction since $f(s) = f(g(c))$ and $f(t) = f(g(b))$. Therefore $g(b) \leq_{T} s$, $b = c$ and hence $s \in b$. A symmetric argument allows one to conclude the same for $u$ so $t, s, u \in b$ and hence are comparable.
\end{proof}

Since the claim is proved the lemma is as well.
\end{proof}

Let's now conclude the proof of Theorem \ref{MAthm}. 

\begin{proof}[Proof of Theorem \ref{MAthm}]
Assume $\MAone$. If $\P$ forces that $\calA$ is not rather branchless then $\P$ adds a branch to $T^A$. But by Lemma \ref{finws} we have that $T^A$ is weakly special and hence by Lemma \ref{coll} $\P$ collapses $\omega_1$.
\end{proof}

Before moving on to the proof of the second part of Main Theorem \ref{mainthm1}, let's observe some easy extensions of Theorem \ref{MAthm}. These involve the following two observations from the proof: first was that we did not need $\MA_{\aleph_1}$ only that every Aronszajn, $\omega_1$-like tree-like partial order of cardinality $\aleph_1$ which embeds into an $\omega_1$-like tree-like partial order with countable levels is special and second is that we didn't use the fact that $\calA$ was rather branchless, only that it had ${\leq}\aleph_1$-many classes. Therefore we actually have the following result which gives a stronger conclusion from a weaker hypothesis.
\begin{theorem}
Assume every tree-like Aronszajn order which embeds into an $\omega_1$-like ranked tree is special. If $\calA = \langle A, T^A, \leq_T, O^A, \leq_O, r, P_1, ...\rangle$ is an $\omega_1$-like tree-like model so that $T^A$ has ${\leq} \aleph_1$-many uncountable branches, then there is no $\omega_1$-preserving forcing adding a branch to $T^A$. 

In particular, if $\calM \models \PA$ is $\omega_1$-like, $T^M_{\PA}$ has $\leq \aleph_1$-many classes and every Aronszajn subtree of $T^M_{\PA}$ is special then no forcing notion can add a class to $\calM$ without collapsing $\omega_1$. The same is true for $\calM \models \ZF$ with $T^M_{\PA}$ replaced by $T^M_{\ZF}$.
\end{theorem}

Using the forcing of \cite{Switz20widetrees}, the above hypothesis can be forced over a model of $\CH$ without adding reals so it's consistent with $\CH$ that there are no destructible Kaufmann models. In fact the following is consistent.
\begin{corollary}
If $\mathsf{ZF}$ is consistent then so is $\ZFC + \CH$ and for all $\omega_1$-like tree-like models $\calA = \langle A, T^A, \leq_T, O^A, \leq_O, r, P_1, ...\rangle$, if $T^A$ has ${\leq}\aleph_1$-many branches then there is no $\omega_1$-preserving forcing adding a branch to $T^A$. In particular if $T$ is a consistent completion of $\PA$ or $\ZF$ and $\ZF$ is consistent then so is $\ZFC + \CH$ and for all $\omega_1$-like models $\calM \models T$ with ${\leq}\aleph_1$-many classes there is no $\omega_1$ preserving forcing adding a class to $\calM$.
\end{corollary}

Finally let us note that if there is an $\omega_1$-like tree-like model with more than $\aleph_1$-many branches then the tree is a Kurepa tree. Since it's consistent (relative to an inaccessible) that there are no Kurepa trees it's consistent that there is no $\omega_1$-preserving forcing notion adding a class to any $\omega_1$-like tree-like model.
\begin{corollary}
If $\mathsf{ZF}$ plus ``there is an inaccessible cardinal" is consistent, then $\ZFC$ plus ``any forcing notion adding a branch to an $\omega_1$-like tree-like model collapses $\omega_1$" is consistent both with $\CH$ and the negation of $\CH$. 

In particular, if $\mathsf{ZF}$ plus ``there is an inaccessible cardinal" is consistent then $\ZFC$ both with $\CH$ and its negation are consistent with ``no $\omega_1$-like model of $\PA$ or $\ZF$ can have a class added to it by forcing without collapsing $\omega_1$".
\end{corollary}

\begin{proof}
By what has been said it suffices to note that from an inaccessible, $\MAone$ can be forced alongside the failure of Kurepa's hypothesis and (for the $\CH$ case) a countable support iteration of the main forcing from \cite{Switz20widetrees} of length $\kappa$ for $\kappa$ inaccessible plus some routine bookkeeping works.
\end{proof}

Note that it was observed by Keisler \cite{KeislerTarskiSymp} that if there is a Kurepa tree, then there is a model $\calM$ so that $T^M_{\PA}$ is Kurepa, so the inaccessible is needed.

\section{Building a Destructible Kaufmann Model}
In this section we prove the second part of Main Theorem \ref{mainthm1}. Specifically we show the following.
\begin{theorem}
Assume $\diamondsuit$. Then (for $\ZF$ and $\PA$) there is a Kaufmann model $\calM$ so that the satisfaction tree $T^M_{sat}$ contains a Souslin subtree and hence is destructible.
\label{diamondthm}
\end{theorem}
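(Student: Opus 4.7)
The plan is to run Jensen's $\diamondsuit$-construction of a Souslin tree in parallel with Kaufmann's $\diamondsuit$-construction of a Kaufmann model (Theorem \ref{ksthm}), building the Souslin tree directly as a subtree of the satisfaction tree of the emerging model. I would partition $\omega_1$ into disjoint stationary sets $S_0 \sqcup S_1$ and use the $\diamondsuit$-predictions along $S_0$ for Kaufmann bookkeeping and along $S_1$ for antichain sealing in the tree.

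The recursion builds a continuous end-extensional filtration $\langle \calM_\alpha : \alpha < \omega_1 \rangle$ of countable recursively saturated models, a cofinal sequence of heights $\langle d_\alpha \in M_{\alpha+1} \setminus M_\alpha \rangle$, and tree levels $\langle S_\alpha \rangle$, where $S_\alpha$ is a countable set of matrices of height $d_\alpha$ in $T^M_{sat}$. At limit $\lambda$ set $\calM_\lambda = \bigcup_{\xi < \lambda} \calM_\xi$. At successor stage $\alpha+1$ form $\calM_{\alpha+1}$: if $\alpha \in S_0$ and the guess $A_\alpha$ is undefinable in $\calM_\alpha$, apply Lemma \ref{Klem} to pick $\calM_{\alpha+1}$ not coding $A_\alpha$; otherwise take any countable recursively saturated end extension. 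Then pick $d_\alpha \in M_{\alpha+1} \setminus M_\alpha$ and form $S_\alpha$ by extending the ``top'' of $S_{<\alpha}$: if $\alpha$ is itself a successor, extend each node of the previous top level, while if $\alpha$ is a limit, following Jensen extend a chosen cofinal branch through each $s \in S_{<\alpha}$. In either case, the requisite branching is available since matrices in $T^M_{sat}$ may differ freely in their ``nonstandard columns'' (indices $n \geq \omega$), while their standard columns are forced to be initial segments of $W_n$. Finally, if $\alpha \in S_1$ and $A_\alpha$ codes a maximal antichain in $S_{<\alpha}$, require each new node in $S_\alpha$ to be above some element of $A_\alpha$ (possible by maximality).

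The main obstacle arises at stage $\lambda + 1$ following a limit $\lambda$, where we must simultaneously extend many cofinal branches of $S_{<\lambda}$. Following Jensen, for each $s \in S_{<\lambda}$ I commit to a cofinal branch $\bar b_s$ through $s$ meeting every antichain sealed below $\lambda$, and $\calM_{\lambda+1}$ must be a countable recursively saturated end extension of $\calM_\lambda$ containing, at a common height $d_\lambda \in M_{\lambda+1} \setminus M_\lambda$, an element coding each $\bar b_s$. Each $\bar b_s$ is a class of $\calM_\lambda$ (its restriction to any $d \in M_\lambda$ is a submatrix of some $b_\xi \in \calM_{\xi+1} \subseteq \calM_\lambda$), and by standard end-extension techniques for countable recursively saturated models of $\PA$ any single class can be coded via an elementary end extension; iterating this $\omega$ times (once per branch) and closing under recursive saturation produces the desired $\calM_{\lambda+1}$.

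Once the construction is complete, $\calM := \bigcup_\alpha \calM_\alpha$ is Kaufmann by Theorem \ref{ksthm}. The subtree $S := \bigcup_\alpha S_\alpha$ of $T^M_{sat}$ has countable levels by construction; it has no uncountable antichain, since every potential maximal antichain is predicted by $\diamondsuit$ on $S_1$ and sealed at a subsequent successor stage (the standard Jensen argument); and it has no uncountable chain in $V$, because such a chain would determine a satisfaction class for $\calM$ (a class in $\calM$, and hence inductive as $\calM$ is $\omega_1$-like), contradicting rather classlessness via Tarski. Hence $S$ is Souslin; forcing with $S$ preserves $\omega_1$ and generically adds a cofinal branch through $T^M_{sat}$, i.e.\ a satisfaction class for $\calM$, as required.
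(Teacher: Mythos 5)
Your high-level plan --- interleaving Kaufmann's $\diamondsuit$-construction with Jensen's --- is the same as the paper's, but your implementation builds the Souslin tree Jensen-style, one countable level per stage, and this creates a limit-level problem that the paper deliberately avoids and that your proposal does not actually solve. The paper's $S_\alpha$ is a ``fat'' subtree of $T^{\alpha}_{sat}$ containing nodes at \emph{every} level of $M_\alpha$ (at a sealing stage it keeps the whole cone of $T^{\alpha}_{sat}$ above each chosen node $u_{s_t,t}$), so limit stages are pure unions and successor stages only require extending single \emph{nodes} of $S_\beta$ to a higher level, a one-application-of-recursive-saturation task. You instead must extend countably many \emph{external} cofinal branches of $S_{<\lambda}$ to a common height $d_\lambda\in M_{\lambda+1}\setminus M_\lambda$, i.e.\ produce one end extension of $\calM_\lambda$ coding countably many prescribed piecewise-coded subsets of $M_\lambda$ simultaneously. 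Your route --- code one branch per step and iterate $\omega$ times --- breaks down and cannot be repaired later: if $\calN_1\succ_{end}\calM_\lambda$ fails to code $\bar{b}_{s_2}$, then no elementary end extension $\calN_2$ of $\calN_1$ codes it either, since a code $c\in N_2$ of height $e$ restricts to any height $e'\in N_1\setminus M_\lambda$, and $c\restriction e'\leq 2^{e'\cdot a}\in N_1$ forces $c\restriction e'\in N_1$ by end-extensionality, so $\calN_1$ would already have coded $\bar{b}_{s_2}$. Nothing in your first step guarantees codability of the remaining branches is preserved, so all branches must be coded in a single extension realizing the joint type; that is a genuinely different lemma from the single-class case and needs its own proof, together with preservation of recursive saturation.

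There is a second unaddressed point: the stages at which you must code branches (successors of limits) are exactly the stages where the $\diamondsuit$-guess matters for rather classlessness, and you say nothing about the possible conflict between Lemma \ref{Klem} (omit a code for $A_\lambda$) and the coding obligation (code all the $\bar{b}_s$). If $A_\lambda$ is interdefinable over $\calM_\lambda$ with one of the branches you are forced to code, Kaufmann's lemma cannot be applied as stated; one needs either a relativized ``omit while coding'' lemma, or an argument that an undefinable class of $\calM$ whose traces are caught only at such conflicted stages would itself be a cofinal branch of $S$, to be excluded by Souslinness plus everywhere-splitting. Neither appears in your write-up. (Your stationary partition $S_0\sqcup S_1$ is fine modulo the standard coding trick producing two disjoint stationary sets each carrying a $\diamondsuit$-sequence, but it does not separate the two tasks, since both land on successors of limit ordinals.) The paper's fat-subtree formulation makes both difficulties disappear, which is the reason to rework the construction in that form rather than patch the level-by-level version.
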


The ``hence" part follows by observing that forcing with the Souslin tree is ccc, and therefore $\omega_1$-preserving, but the generic branch will define a satisfaction class for $\calM$ as explained in the proof of Theorem \ref{schmerl}. The idea behind the proof is to use the diamond sequence to weave together Kaufmann's original argument for the existence of a Kaufmann model with Jensen's classic argument of the existence of a Souslin tree.

\begin{proof}
Fix a diamond sequence $\vec{A} = \langle A_\alpha \; | \; \alpha < \omega_1\rangle$, a countable, recursively saturated model $\calM_0$ and, if $\calM_0 \models \PA$ (as opposed to $\ZF$), a nonstandard element $a \in M_0$. Other than this one sentence, the proof is verbatim the same whether we work with $\ZF$ or $\PA$ so we remain ambiguous from now on. The only thing we really need is the construction of the satisfaction tree as in Theorem \ref{schmerl} and the proceeding discussion of its analogue for models of $\ZF$. 

As in the proof of Theorem \ref{ksthm}, we will build a filtration of countable, recursively saturated models $\langle \calM_\alpha \; | \; \alpha < \omega_1\rangle$ however this time we will also build a $\subseteq$-increasing continuous sequence of sets $\langle S_\alpha \; | \; \alpha < \omega_1\rangle$ so that for all $\alpha < \omega_1$ we have $S_\alpha \subseteq M_\alpha$, and $\calM = \bigcup_{\alpha < \omega_1} \calM_\alpha$ is a Kaufmann model and $S := \bigcup_{\alpha < \omega_1} S_\alpha$ is a Souslin subtree of $T^M_{sat}$ relative to $a$. 

We construct $(\calM_\alpha, S_\alpha)$ recursively. The construction essentially mirrors Kaufmann's original construction of a Kaufmann model from $\diamondsuit$ done at the same time as Jensen's original construction of a Souslin tree from $\diamondsuit$. Given any $\calM_\alpha$ let $T^\alpha_{sat}$ be the satisfaction tree for $\calM_\alpha$ relative to $a$. We already gave $\calM_0$, let $S_0$ be $T_{sat}^0$. Assume that we have constructed $(\calM_\xi, S_\xi)$ for all $\xi < \alpha$, and that for each $\xi < \alpha$ $\calM_\xi$ is a countable recursively saturated end extension of is predecessors, $S_\xi$ is a subset of $T^\xi_{sat}$ which intersects every level $d \in M_\xi$ and so that each $t \in S_{\xi}$ has extensions on all levels above it. Without loss, we can assume that each $M_\xi$ is a set of countable ordinals. As before there will be a club of $\xi$ so that $M_\xi = \xi$.

\noindent \underline{Case 1}: $\alpha$ is a limit ordinal. By the requirements we have, $\calM_\alpha = \bigcup_{\xi < \alpha} \calM_\xi$ and $S_\alpha = \bigcup_{\xi < \alpha} S_\xi$.

\noindent \underline{Case 2}: $\alpha = \beta + 1$ for some $\beta$. If $A_\beta \subseteq M_\beta$ is an undefinable class then extend $\calM_\beta$ as in Lemma \ref{Klem} so that $A_\beta$ is not coded into $\calM_\alpha$. Otherwise let $\calM_\alpha$ be any countable, recursively saturated end extension. Note the priority: we have $\calM_\alpha$ now and will use it to define $S_\alpha$.

If $A_\beta \subseteq S_\beta$ is a maximal antichain, then do as follows. First choose a level $b \in M_\alpha \setminus M_\beta$ and, for each of the countably many $t \in S_\beta$ choose exactly one $s_t \in A_\beta$ comparable with $t$ and one element $u_{s_t, t} \in T^\alpha_{sat}$ on the $b^{\rm th}$ level that extends both $s_t$ and $t$. Note that by the maximality of $A_\beta$ there is such an $s$ for each $t$ and by recursive saturation in $M_\alpha$ there is such a $u_{s_t, s}$. The set of all such $u_{s_t, t}$ will be the $b^{\rm th}$ level of the Souslin tree we're constructing. Specifically, let $S_\alpha^- = S_\beta \cup \{u_{s_t, t} \; | \; t \in S_\beta\}$ and let $S_\alpha$ be the downward closure of $S^-_{\alpha}$ in $T_{sat}^\alpha$ alongside every extension of an element in $S^-_{\alpha}$ in $T^\alpha_{sat}$ to the levels $b' > b$ in $M_\alpha$. 

If $A_\beta$ is not a maximal antichain of $S_\beta$ then let $S_\alpha$ be simply the collection of all extensions in $T^\alpha_{sat}$ of every node in $S_\beta$ to every level in $M_\alpha \setminus M_\beta$. This completes the construction. 

Let $\calM= \bigcup_{\alpha < \omega_1} \calM_\alpha$ and let $S = \bigcup_{\alpha < \omega_1} S_\alpha$. The verification that $\calM$ is Kaufmann is verbatim as in Theorem \ref{ksthm}.

To see that $S$ is a Souslin tree, suppose that $A \subseteq S$ is a maximal antichain. I claim that there is a club of $\xi$ so that $A\cap S_\xi$ is a maximal antichain in $S_\xi$. Let $C$ denote the set of all such $\xi$. Clearly $C$ is closed since any increasing union of maximal antichains will again be a maximal antichain. To see that $C$ is unbounded, fix an ordinal $\xi:=\xi_0$. If $A \cap S_\xi$ is not maximal then, for each of the countably many elements $t \in S_\xi$ not comparable with anything in $A \cap S_\xi$ find some element $a_t \in A$ which is comparable with them. Let $\xi_1 > \xi_0$ be such that $A \cap S_{\xi_1}$ contains all of these $a_t$ ($\xi_1$ is countable since there are only countably many things to add). Continuing in this way, recursively define for each $n < \omega$ a countable ordinal $\xi_{n+1} > \xi_{n}$ so that every $a \in A \cap S_{\xi_n}$ is comparable with something in $A \cap S_{\xi_{n+1}}$. Finally let $\xi_\omega := {\rm sup}_{n \in \omega}\xi_n$. Clearly $A \cap S_{\xi_\omega}$ is maximal by the continuity requirement of the construction.

It follows by $\diamondsuit$ that there is an $\xi$ so that $A_\xi = A \cap S_\xi$. But then there is a level $d \in M_{\xi + 1}$ so that every element of $A_\xi$ is comparable with a node $t$ of height $d$ by our construction so if $s \in T^M_{sat}$ is of height greater than $d$ then $s \notin A$ since it's comparable with some node in $A_\xi \subseteq A$. Thus $A$ is bounded and therefore countable.
\end{proof}
As a remark, let us note that the above construction can also be done via forcing: let $(\calM_0, S_0, a)$ be as above and $\mathbb P$ be the set of pairs $(\calM, S_M)$ so that $\calM_0 \prec \calM$, $\calM$ is recursively saturated, countable, $S_0 \subseteq S_M$ and $S_M \subseteq T_{sat}^M$, which has non-empty intersection with every level in $M$. The order is pairwise by elementary end extension and end extension as a partially ordered set. This forcing is countably closed and the verification that it adds a destructible Kaufmann model goes through exactly as in the proof of the theorem, replacing the $\diamondsuit$ construction by a collection of density arguments. I do not know if the second coordinate is necessary or if forcing with the models alone will make the resulting generic Kaufmann model destructible, though I suspect that this is the case. However, this forcing construction is weaker than the proof from diamond since the forcing, being countably closed and adding a subset to $\omega_1$, adds a diamond sequence.

Finally we note that even though $\diamondsuit$ implies $\CH$, it's consistent that there are destructible Kaufmann models and the continuum is arbitrarily large.

\begin{proposition}
Assume $\diamondsuit$, then there is a destructible Kaufmann model in the extension by any number of Cohen reals. 
\end{proposition}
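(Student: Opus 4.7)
The plan is to take the Kaufmann model $\calM$ and the Souslin subtree $S \of T^M_{sat}$ constructed in Theorem \ref{diamondthm} under $\diamondsuit$ in the ground model $V$, and to argue that in $V[G]$, for $G$ generic over any Cohen forcing $\mathbb{C}$, the model $\calM$ remains Kaufmann and $S$ remains Souslin. The Souslinity of $S$ in $V[G]$ then witnesses the destructibility of $\calM$ there.

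For Souslinity of $S$ in $V[G]$: Cohen forcing is $\sigma$-centered, and it is a standard fact that $\sigma$-centered (indeed Knaster) forcing preserves ground-model Souslin trees. Consequently forcing with $S$ over $V[G]$ is still ccc, $\omega_1$-preserving, and its generic branch codes a satisfaction class for $\calM$ exactly as in the proof of Theorem \ref{schmerl}. As for Kaufmann-ness of $\calM$ in $V[G]$, being $\omega_1$-like is preserved because $\mathbb{C}$ is ccc and the universe of $\calM$ is unchanged, and recursive saturation is absolute since it depends only on $\calM$.

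The central step is preservation of rather classlessness. Since classes of $\calM$ correspond to cofinal branches of $T^M_{fin}$, it suffices to show $\mathbb{C}$ adds no new cofinal branch. Write $\mathbb{C} = \bigcup_n \mathbb{C}_n$ with each $\mathbb{C}_n$ centered. Given a name $\dot b$ for a cofinal branch, form in $V$ the set $T^{(n)} = \{s \in T^M_{fin} : \exists \alpha < \omega_1,\, \exists p \in \mathbb{C}_n,\, p \forces \dot b(\alpha) = s\}$. Each $T^{(n)}$ is a chain in $T^M_{fin}$: given two witnesses $p_1, p_2 \in \mathbb{C}_n$ for values $s_1, s_2$, pairwise compatibility in the centered piece provides a common refinement $q \leq p_1, p_2$ forcing both values into $\dot b$, hence $s_1$ and $s_2$ are comparable. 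A pigeonhole in $V[G]$ now yields some $n^*$ for which the generic values of $\dot b$ arise from $\mathbb{C}_{n^*}$-conditions on a cofinal set of levels; therefore $T^{(n^*)}$ has cardinality $\aleph_1$ in $V$, is a cofinal chain in $T^M_{fin}$, and generates a ground-model cofinal branch $b^*$. Since $\dot b_G$ agrees with $b^*$ cofinally, the two coincide, so $\dot b_G = b^* \in V$, and by rather classlessness in $V$ the corresponding class is definable.

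Putting this together, $\calM$ is still Kaufmann in $V[G]$ and $S$ is still a Souslin subtree of $T^M_{sat}$ there, so $\calM$ is a destructible Kaufmann model in $V[G]$. The main technical point is the $\sigma$-centered branch-preservation argument for $T^M_{fin}$; despite this tree already carrying $\aleph_1$-many ground-model cofinal branches (unlike in the Souslin case), the argument still succeeds because $\sigma$-centeredness forces any new cofinal branch to align with a ground-model chain, pinning $\dot b_G$ down to an already-definable class.
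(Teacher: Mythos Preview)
Your approach matches the paper's exactly: take the pair $(\calM, S)$ produced in Theorem \ref{diamondthm} and verify that both survive Cohen forcing. The paper simply cites as known that Cohen forcing neither kills Souslin trees nor adds branches to $\omega_1$-trees; you supply arguments for these facts, which is more informative. The Souslin-preservation part is fine, since Cohen forcing of any size is Knaster.

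There is, however, a small gap in your branch-preservation argument for $T^M_{fin}$. You write the Cohen poset as a countable union of centered pieces, but $\text{Fn}(\lambda,2)$ is $\sigma$-centered only when $\lambda \le \mathfrak{c}$ (this is essentially Hewitt--Marczewski--Pondiczery: $2^\lambda$ is separable iff $\lambda\le\mathfrak{c}$). Since $\diamondsuit$ gives $\CH$, adding more than $\aleph_1$ Cohen reals is \emph{not} $\sigma$-centered, and the proposition explicitly concerns ``any number'' of Cohen reals. The repair is immediate: by the ccc, any name $\dot b$ for a cofinal branch through $T^M_{fin}$ (a set of size $\aleph_1$) involves only $\aleph_1$-many coordinates, so $\dot b$ is already a $\text{Fn}(A,2)$-name for some $A\subseteq\lambda$ with $|A|\le\aleph_1$; under $\CH$ the factor $\text{Fn}(A,2)$ \emph{is} $\sigma$-centered, and your chain-pigeonhole argument then goes through verbatim on that factor. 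With this reduction inserted, the proof is complete and strictly more detailed than the paper's.
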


\begin{proof}
Suppose $\calM$ and $S$ are as in the proof of Theorem \ref{diamondthm} (the $\diamondsuit$ hypothesis guarantees their existence). Let $\mathbb P$ be the forcing to add $\lambda$ many Cohen reals for your favorite $\lambda$. Since $\mathbb P$ is ccc, it preserves $\omega_1$ hence $\calM$ remains an $\omega_1$-like recursively saturated model. Moreover, Cohen forcing neither kills Souslin trees nor adds branches to $\omega_1$-trees (like $T^{M}_{\PA}$/$T^M_{\ZF}$) so it cannot add a class to $\calM$ nor kill the Souslin-ness of $S$. Hence $\calM$ is still a Kaufmann model and $S$ is still a ccc forcing adding a satisfaction class.
\end{proof}


\section{Axiomatizability of Kaufmann Models}
In this section we prove Main Theorem \ref{mainthm2}. The proof involves the logic, $L_{\omega_1, \omega} (Q)$ the infinitary logic $L_{\omega_1, \omega}$ enriched with the quantifier $Q$ where the interpretation of $Qx\varphi(x)$ is ``there exist uncountably many $x$ so that $\varphi(x)$ holds". Recall from \cite{L(Q)} that a {\em standard model} of $L_{\omega_1, \omega}(Q)$ is a structure $\mathcal M = \langle M, [M]^{\geq \omega_1}, ...\rangle$ so that for any formula $\varphi(\bar{x}, y)$ and any $\bar{a} \in M^{ln(\bar{x})}$ we have that $\mathcal M \models Qy \varphi(\bar{a}, y)$ if and only if the set $\{y \in M \; | \; \mathcal M \models \varphi(\bar{a}, y)\}$ is uncountable. There is also a relatively straightforward, Hilbert-style notion of proof for this logic, see \cite[p. 69]{L(Q)}. In \cite[Theorem 4.10]{L(Q)} Keisler proved the following completeness theorem.

\begin{theorem}[Keisler]
For any sentence of $L_{\omega_1, \omega}(Q)$ $\psi$ we have that $\vdash \psi$ if and only if for every standard model $\mathcal M$ in the vocabulary of $\psi$ we have $\mathcal M \models \psi$.
\end{theorem}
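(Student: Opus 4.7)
The plan is to prove this via a Henkin-style model existence theorem adapted to both the infinitary conjunctions of $L_{\omega_1,\omega}$ and the generalized quantifier $Q$. Soundness is routine by induction on proof length: each axiom scheme and inference rule of Keisler's Hilbert system is directly verifiable in any standard model, with the $Q$-axioms following immediately from the semantic clause that $\mathcal{M} \models Qx\varphi$ iff $\{a \in M : \mathcal{M} \models \varphi(a)\}$ is uncountable.

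For completeness, assume $\psi$ is consistent; I would construct a standard model of $\psi$ of cardinality $\aleph_1$ by a transfinite Henkin construction of length $\omega_1$. First I would fix a countable fragment $\mathcal{F} \subseteq L_{\omega_1,\omega}(Q)$ containing $\psi$ and closed under subformulas, substitutions, and negations, and introduce countably many new Henkin constants at each of $\omega_1$ levels. Next I would define the appropriate notion of consistency property for this logic: a family $\mathsf{S}$ of countable, deductively consistent theories in $\mathcal{F}$ (over countably many constants) with extension clauses for each logical constructor, including a clause permitting that whenever $Qx\varphi \in \Sigma$ one may extend $\Sigma$ by $\varphi(c)$ for some fresh $c$. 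One then builds a continuous filtration $\langle \mathcal{M}_\alpha : \alpha < \omega_1 \rangle$ of countable structures so that at each successor stage one adds Henkin witnesses for the currently active existential sentences, decides a countable conjunction or disjunction in $\mathcal{F}$, and, for every $Qx\varphi$ in the current theory, introduces a new constant $c$ witnessing $\varphi(c)$. A bookkeeping argument along the $\omega_1$-length filtration guarantees every obligation is eventually met, and the union $\mathcal{M} := \bigcup_{\alpha<\omega_1} \mathcal{M}_\alpha$ is the desired standard model.

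The main obstacle, as I see it, is the dual behavior of $Q$. While a sentence $Qx\varphi$ eventually acquires uncountably many witnesses (one added at each of uncountably many successor stages), a sentence $\neg Qx\varphi \in \Sigma$ demands that in the final $\mathcal{M}$ the set $\{a : \mathcal{M} \models \varphi(a)\}$ remain countable, so no stage may ever introduce a new constant satisfying $\varphi$ once this negated $Q$-sentence is active. The consistency property must therefore forbid extensions that enlarge the pool of $\varphi$-witnesses beyond the countably many constants already naming them; proof-theoretically, this uses Keisler's $Q$-axioms to derive that any candidate new witness must provably coincide with an existing one. Balancing these opposing requirements over $\omega_1$ stages, together with verifying a Tarski--Vaught-like clause for $Q$ at limit stages of the filtration, is the delicate core of the argument. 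Once that is in place, $\mathcal{M} \models \psi$ follows by a standard induction on the complexity of $\mathcal{F}$-formulas, completing the completeness direction.
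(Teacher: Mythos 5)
First, note that the paper does not prove this statement at all: it is quoted as Keisler's completeness theorem and cited directly to \cite[Theorem 4.10]{L(Q)}, so there is no in-paper argument to compare yours against. Judged on its own terms, your sketch correctly reproduces the overall architecture of Keisler's proof: soundness by induction on derivations, and completeness via a consistency-property/Henkin construction over a countable fragment, run along a continuous chain of length $\omega_1$ of countable approximations whose union is a standard model of cardinality $\aleph_1$, with $Q$-positive formulas acquiring new witnesses cofinally often and $Q$-negative formulas acquiring none after the stage at which their parameters appear.

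There is, however, a genuine gap exactly where you locate the ``delicate core,'' and that core is essentially the entire mathematical content of the theorem. You cannot simply stipulate that the consistency property ``forbids extensions that enlarge the pool of $\varphi$-witnesses'' when $\neg Qx\varphi$ is active: the construction is simultaneously obligated at each successor stage to realize Henkin witnesses for $\exists$- and $Q$-formulas and to decide disjunctions, and one must \emph{prove} that these obligations can always be discharged without adding a new element to any definable set the theory has declared countable. This is an omitting-types (elementary extension) lemma for countable ``weak models'' of the axioms---every such model has a proper extension, elementary for the fragment, that adds a new element to every $Q$-positive definable set and none to any $Q$-negative one---and its proof genuinely uses the specific axioms of the system (for instance $\neg Qx(x=y\lor x=z)$, the monotonicity axiom $\forall x(\varphi\to\psi)\to(Qx\varphi\to Qx\psi)$, and $Qx(\varphi\lor\psi)\to Qx\varphi\lor Qx\psi$) to show that the relevant types are omissible. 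Your proposed mechanism, that ``any candidate new witness must provably coincide with an existing one,'' is not what happens and is not in general derivable; what one shows is that the type asserting ``$x$ is a new witness of $\psi$'' is unsupported relative to the theory. Until that extension lemma is stated and proved, the construction could stall at a successor stage, so the completeness direction is not established: as written, the proposal is a correct table of contents for Keisler's argument rather than a proof.
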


Note that this theorem implies that if an $L_{\omega_1, \omega}(Q)$ sentence from $V$ has a model in some forcing extension, then it has one in the ground model via generic absoluteness. This is the key step in Shelah's argument that there are Kaufmann models in $\ZFC$. One thing to note is that formulas of $L_{\omega_1, \omega}(Q)$ are coded by reals so in forcing extensions adding reals, one adds new formulas.

I will need the following, elementary observation.
\begin{observation}
Suppose $\calM$ is an $\calL$-structure for some $\calL$ and $\mathbb P$ is a forcing notion that preserves $\omega_1$. Then for any $L_{\omega_1, \omega}(Q)$ formula $\psi(\bar{x})$ and any tuple $\bar{a}$ in $\calM$ we have that $\calM \models \psi(\bar{a})$ if and only if $\forces_{\mathbb P}$``$\check{\calM} \models \psi(\bar{a})$". 
\label{absoluteness}
\end{observation}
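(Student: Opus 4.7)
The proof will proceed by induction on the complexity of the $L_{\omega_1,\omega}(Q)$-formula $\psi$, where the induction is carried out along the well-founded tree of subformulas as computed in $V$ (each formula in $V$ has only countably many immediate subformulas, and the subformula relation is well-founded in $V$). Throughout I will use freely that forcing does not change the underlying universe $M$ or the interpretation of any relation/function symbol, so the value of any atomic formula at a tuple from $M$ is absolute between $V$ and $V^{\mathbb P}$. The induction steps for negation, finite connectives, and the ordinary first-order quantifiers $\exists x,\forall x$ (quantifying over $M$) are immediate from the inductive hypothesis, since quantification ranges over the fixed set $M$.

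The first genuinely infinitary case is a countable conjunction $\psi(\bar x)=\bigwedge_{i\in\omega}\psi_i(\bar x)$ (and dually for disjunctions). The index set $\omega$ is absolute, and the satisfaction relation for the conjunction is, by definition in both models, the conjunction over $i\in\omega$ of the satisfaction of $\psi_i(\bar a)$. By the inductive hypothesis each of the countably many equivalences $\calM\models\psi_i(\bar a)\iff\Vdash_{\mathbb P}\anf{\check\calM\models\check\psi_i(\check{\bar a})}$ holds, and these uniformly pass through the countable conjunction, giving absoluteness for $\psi$.

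The key case is the quantifier $Q$. Suppose $\psi(\bar x)=Qy\,\varphi(\bar x,y)$ and let $X=\{b\in M\st\calM\models\varphi(\bar a,b)\}$ as computed in $V$, and $X'$ the analogous set as computed in $V^{\mathbb P}$. By the inductive hypothesis applied pointwise, $b\in X\iff b\in X'$ for every $b\in M$, so $X=X'$ as subsets of $M$. Now $\calM\models Qy\,\varphi(\bar a,y)$ holds in $V$ iff $X$ is uncountable in $V$, and $\Vdash_{\mathbb P}\anf{\check\calM\models Qy\,\varphi(\check{\bar a},y)}$ holds iff $X'=X$ is uncountable in $V^{\mathbb P}$. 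Since countability is downward absolute, and since $\mathbb P$ preserves $\omega_1$, a subset of $M$ of size $\aleph_1$ in $V$ remains of size $\aleph_1$ in $V^{\mathbb P}$, and a countable set in $V$ is still countable in $V^{\mathbb P}$; thus uncountability of $X$ is preserved and reflected between the two models, giving the desired equivalence.

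The only point requiring any care is this last case: one must verify that preservation of $\omega_1$ is exactly what is needed to make the $Q$-quantifier absolute, and that the inductive hypothesis is invoked at each element of $M$ uniformly so that the sets $X$ and $X'$ literally coincide. No other ingredient is needed beyond well-foundedness of the subformula relation and the fact that forcing adds no new elements to $M$.
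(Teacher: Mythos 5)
Your proof is correct and takes essentially the same approach as the paper: an induction on the formula in which the $L_{\omega_1,\omega}$ cases are absolute for trivial reasons and the only substantive case is the quantifier $Q$, which is handled exactly as you do via preservation of $\omega_1$. The paper states this in three sentences; you have merely spelled out the details it leaves implicit.
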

Roughly this observation amounts to saying that $L_{\omega_1, \omega}(Q)$ truth cannot be changed by $\omega_1$-preserving forcing. 

\begin{proof}
The proof is by induction on $\psi$. Since $L_{\omega_1, \omega}$ satisfaction is absolute between forcing extensions and grounds the only non obvious case is when $\psi$ is of the form $Qx\varphi(x, \bar{y})$. However, this follows immediately by the inductive hypothesis and the fact that $\mathbb P$ preserves $\omega_1$. 
\end{proof}

Using these results and the proofs of Theorems \ref{MAthm} and \ref{diamondthm} we will show the following.
\begin{theorem}
Let $T$ be any consistent completion of either $\PA$ or $\ZF$. Let $\calL$ be the language of $T$.
\begin{enumerate}
\item
Under $\MA_{\aleph_1}$ there is an $L_{\omega_1, \omega}(Q)$ sentence $\psi_T$\footnote{Of course, $\psi_T$ depends on $T$.} in the language $\calL$ enriched with a single unary function symbol $f$, $\calL (f)$, so that a model $\calM \models T$ is Kaufmann if and only if there is an expansion of $\calM$ to an $\calL(f)$-structure satisfying $\psi_T$. 
\item
Under $\diamondsuit$ there is a Kaufmann model $\calM \models T$ so that given any expansion $\calL'$ of the language $\calL$, and any expansion of $\calM$ to an $\calL'$-structure, $\calM '$, and any countable set $X$ of $L_{\omega_1, \omega}(Q)$ sentences in the signature $\calL'$ there is a model $\calN$ which agrees with $\calM '$ about the truth of every sentence in $X$ but carries a satisfaction class for its $\calL$-reduct. In particular, the $\calL$-reduct of $\calN$ is not rather classless.
\end{enumerate}
\label{mainthm22}
\end{theorem}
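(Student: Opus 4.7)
The plan is to handle the two parts separately, dispatching Part 2 first since it is the cleaner of the two. For Part 2 I use the Kaufmann model $\calM$ together with the Souslin subtree $S \sub T^M_{\mathrm{sat}}$ produced by Theorem \ref{diamondthm}. Given any expansion $\calL' \supseteq \calL_{\PA}$, an $\calL'$-expansion $\calM'$ of $\calM$, and a countable $L \sub L_{\omega_1,\omega}(Q)(\calL')$, I form the $L_{\omega_1,\omega}(Q)$-theory $T^*$ in the signature $\calL' \cup \{\dot\Sigma\}$ (with $\dot\Sigma$ a fresh binary predicate) consisting of (a) $\phi$ or $\neg\phi$ for each $\phi \in L$ according to whether $\calM' \models \phi$, and (b) the axioms stating that $\dot\Sigma$ is a partial inductive satisfaction class on the $\calL_{\PA}$-reduct. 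This theory is countable. Forcing over $V$ with $S$, which is ccc and hence $\omega_1$-preserving, the generic branch defines a satisfaction class $\Sigma$ on $\calM$ by the discussion following Theorem \ref{schmerl}, and by Observation \ref{absoluteness} the $L_{\omega_1,\omega}(Q)$-theory of $\calM'$ is unchanged in $V[G]$, so $(\calM',\Sigma) \models T^*$ in $V[G]$. Thus $T^*$ is consistent in Keisler's Hilbert calculus; since consistency is an arithmetic statement it is absolute, so $T^*$ is also consistent in $V$, and by Keisler's completeness theorem it has a model $\calN$ in $V$, which is the desired witness.

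For Part 1 I will follow the blueprint of Shelah's absoluteness proof from \cite{Shelah2ndorderprop2}. I take $\psi \in L_{\omega_1,\omega}(Q)(\calL_{\PA}(f))$ to be the conjunction of: (i) the axioms of $\PA$; (ii) the $L_{\omega_1,\omega}(Q)$-sentence $(Qx)(x=x) \wedge \forall x\,\neg(Qy)(y \le x)$ asserting $\omega_1$-likeness; (iii) the $L_{\omega_1,\omega}$-conjunction, indexed by recursive types $p(v,\bar w)$, saying that $p$ is realized whenever it is finitely satisfiable, which encodes recursive saturation; and (iv) a first-order statement about $f$ asserting that $f$ weakly specializes the $\calM$-tree $T^M_{\mathrm{fin}}$, appropriately strengthened in the manner of Shelah's construction so as to also rule out undefinable branches. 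For the forward direction, if $\calM$ is Kaufmann then clauses (i)--(iii) hold automatically, and Lemma \ref{finws} under $\MAone$ produces an $f$ satisfying (iv), so $(\calM,f) \models \psi$. For the backward direction, any model $(\calM,f)$ of $\psi$ is $\omega_1$-like and recursively saturated by (ii) and (iii); if an undefinable $A \sub M$ existed, the corresponding branch through $T^M_{\mathrm{fin}}$ would contradict the strengthened specialness condition in (iv), forcing rather classlessness.

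The main obstacle lies in the backward direction of Part 1: plain weak specialness of $T^M_{\mathrm{fin}}$ does not by itself bound the number of uncountable branches of the tree, so clause (iv) must go beyond weak specialness and package enough additional first-order data about $f$ to force every branch to be definable in $\calM$. Shelah's construction in \cite[Theorem 6]{Shelah2ndorderprop2} furnishes precisely such a strengthening; extracting its exact formulation and then verifying both that it holds for every Kaufmann model under $\MAone$ (via a refinement of Lemma \ref{finws}) and that it forces rather classlessness on any $\psi$-model is the technical heart of the argument, and is the reason the author notes that Part 1 follows as a corollary of Shelah's proof. Part 2, by contrast, is essentially a repackaging of the destructibility construction of Theorem \ref{diamondthm} together with the $\omega_1$-preservation content of Observation \ref{absoluteness} and the generic absoluteness of Keisler's completeness.
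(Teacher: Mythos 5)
Your Part 2 is correct and follows essentially the same route as the paper: force with the Souslin subtree $S$ from Theorem \ref{diamondthm}, use Observation \ref{absoluteness} to see the relevant $L_{\omega_1,\omega}(Q)$-truths are preserved, observe the expanded theory has a model in $V[G]$, pull consistency back to $V$, and apply Keisler's completeness theorem. (The paper phrases the theory as the full $L_{\omega_1,\omega}(Q)$-theory of $\calM'$ and then restricts to countable subtheories at the end; your move of building the countable theory from $L$ and its negations up front is the same argument.)

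Part 1, however, has a genuine gap: you never produce the sentence, and producing it is the actual content of the claim. Your clause (iv) is described as ``a first-order statement about $f$ \ldots appropriately strengthened in the manner of Shelah's construction,'' and you then defer the ``technical heart'' to \cite{Shelah2ndorderprop2}. Two problems. First, the needed clause cannot be first-order: expressing that $f$ witnesses rather classlessness requires the quantifier $Q$ (to say ``uncountably many $t$ above $s$ with $f(t)=f(s)$'') and an infinitary disjunction over all $\calL_{\PA}$-formulas (to say ``the branch so generated is definable''), so it lives properly in $L_{\omega_1,\omega}(Q)$. Second, your stated worry --- that weak specialness does not bound the number of branches --- misidentifies what the extra clause must do. The paper's sentence $RC(f)$ does not bound branches; it asserts directly that whenever $f(s)=n$ and uncountably many $t\geq_{\mathrm{fin}} s$ have $f(t)=n$, then the set $\{y : \exists t\,(s\leq_{\mathrm{fin}} t \wedge t(y)=1 \wedge f(t)=n)\}$ is defined by some $\varphi(y,\bar a)$. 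Weak specialness guarantees that such a monochromatic uncountable set above $s$ is a chain, hence generates a branch, and a pigeonhole argument shows every uncountable branch of $T^M_{\mathrm{fin}}$ arises this way; so $ES(f)\wedge RC(f)$ forces every class of $\calM$ to be definable, and conversely any Kaufmann model's weak specializing function (from Lemma \ref{finws}) satisfies $RC(f)$ by rather classlessness. Without writing down $RC(f)$ and verifying both directions, your Part 1 is an outline of where the proof should go rather than a proof.
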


\begin{remark}
The wording of Part 2 is a little verbose. The point is that, even enriching $\calM$ with any amount of extra structure, we can always find a model which agrees with $\calM$ on any $L_{\omega_1, \omega}(Q)$ sentence and has a satisfaction class. Thus, in contrast to the case under $\MAone$, no amount of extra structure suffices to axiomatize Kaufmann models in $L_{\omega_1, \omega}(Q)$.
\end{remark}

As mentioned in the introduction Part 1 of the above theorem can be inferred easily from the proof of \cite[Theorem 6]{Shelah2ndorderprop2}. I'm not sure if this was observed at the time. We give a complete, self contained proof here however for the convenience of the reader. Note that in the proof we will often write sentences in the signature of $\calL_{\PA}$ (respectively $\calL_{\ZF}$) involving natural numbers, $n < \omega$. By this we will always mean the formal term $\underline{n} := S(S(...(S(0))...))$ (with $n$ iterations of the successor function $S$) if working in $\calL_{\PA}$ or in $\calL_{\ZF}$ the formal term for the von Neumann ordinal $n$. Since every model of $\PA$ or $\ZF$ contains a copy of the natural numbers there is no ambiguity in this.

\begin{proof}[Proof of Part 1 of Theorem \ref{mainthm22}]
Assume $\MAone$ holds. For the sake of definiteness we will prove the theorem for a consistent, completion $T$ of $\PA$ however, replacing $T^M_{\PA}$ and all related vocabulary with that of $T^M_{\ZF}$ proves the case of $\ZF$. In fact the proof of this part is valid for any tree-like theory satisfying the collection scheme which is formalizable in a computable language\footnote{However, if the theory does not allow for definitions of each standard natural number like $\PA$ and $\ZF$ do then constants for each one need to be added to the language as is done in \cite[Theorem 6]{Shelah2ndorderprop2}. Also, the restriction to computable languages is not needed except to make sense of the notion of ``recursively saturated".}. 

First observe that if $\calM \models T$ then one can easily write down being $\omega_1$-like and recursively saturated in $L_{\omega_1, \omega}(Q)$ (in any language) as follows. 
\begin{enumerate}
\item
$\calM$ is $\omega_1$-like if and only if it satisfies $Qx (x = x) \land \forall y \neg Qx (x \leq y)$
\item
 $\calM$ is recursively saturated if and only if it satisfies 
\begin{center}
$\forall \bar{y} \bigwedge_{p(x, \bar{y}) \; {\rm a \; computable \; type}}( \bigwedge_{\Phi(x, \bar{y}) \; {\rm finite \; subset \; of} \; p(x, \bar{y})}\exists x \Phi(x, \bar{y}) \rightarrow \exists x \bigwedge_{\phi(x, \bar{y}) \in p(x, \bar{y})} \phi(x, \bar{y}))$
\end{center} 
\end{enumerate}

Therefore, what we need to show is that there is a sentence $\psi$ in the language $\mathcal L_{\PA}(f)$ so that a model $\calM$ is rather classless if and only if there is a function $f^M:M \to M$ so that $\langle M,..., f^M\rangle \models \psi$. The idea is that $f$ will be a weak specializing function for the tree $T^M_{\PA}$ (which exists by $\MA_{\aleph_1}$) and using this function we will be able to say that all uncountable branches are definable. Shelah's sentence from \cite{Shelah2ndorderprop2} says more or less the same, though because we're not working in the general setup he works in there we can simplify things slightly. First note that $f$ being weakly special can be expressed as follows\footnote{Note that to prove the theorem for models of $\ZF$ we replace the term $\leq_{\PA}$ in the following displayed sentence, and the one after, with $\leq_{\ZF}$.}:

\begin{equation*}
ES(f) := \forall x [\bigvee_{n < \omega} f(x) = n] \land \forall s, t, u [f(s) = f(t) = f(u) \land s \leq_{\PA} t, u \rightarrow (t \leq_{\PA} u \lor u \leq_{\PA} t)]
\end{equation*}

So it remains to show that, for an weakly specializing function $f$, we can write down that $f$ witnesses that $\calM$ is rather classless. The sentence is as follows, below ``$RC$" means ``rather classless": 

\begin{center}
$RC(f) := \forall s \bigvee_{n < \omega} (f(s) = n  \land Q t (f(s) = f(t) = n \land s \lneq_{fin} t) )\rightarrow \exists \bar{a} \bigvee_{\varphi \in \mathcal L_{\PA}} [ \forall y \varphi(y, \bar{a}) \leftrightarrow \exists t (s \leq_{\PA} t \land t(y) = 1 \land f(t) = n)]$
\end{center}
Note that since elements of $T^M_{\PA}$ are (coded) binary sequences the notation ``$t(y) = 1$" makes sense\footnote{In the case of models of $\ZF$ we replace ``$t(y) = 1$"  by ``$y \in t$".}. The reader should convince themself that in English the above says the following:

\begin{center}
``For all $s$, if for some $n$ $f(s) = n$ and there are uncountably many $t$ so that $s \leq_{\PA} t$ and $f(t) = n$ then there are an $\bar{a}$ and a formula $\varphi \in \mathcal L_{\PA}$ so that for all $y$ $\varphi(y, \bar{a})$ if and only if $t(y) = 1$ for some $t$ with $s \leq_{\PA} t$ and $f(t) = n$."
\end{center}

Since, by the proof of Theorem \ref{MAthm}, we know that every Kaufmann model's $T^M_{\PA}$ is weakly special, we need to show that $\calM$ is Kaufmann if and only if its weakly specializing function $f$ satisfies $RC(f)$. Here are the details. First suppose that $\calM$ is an $\omega_1$-like, recursively saturated model of $\PA$ which has an expansion to $\mathcal L_\PA(f)$ satisfying $ES(f) \land RC(f)$. Fix such an $f^M:M \to M$.  Let $b$ be an uncountable branch through $T^M_{\PA}$. We need to show that there is a formula $\varphi$ and a tuple $\bar{a}$ so that for all $y \in M$, $\cup b(y) = 1$ if and only if $M \models \varphi(y, \bar{a})$. By $RC(f^M)$ then there is an $\bar{a}$ and a formula $\varphi \in \mathcal L_{\PA}$ so that for all $y$ $\varphi(y, \bar{a})$ if and only if there is a $t$ above $s$ with $t(y) = 1$ and $f(t) = n$. By the property of weak specializing functions, if $s\leq_{\PA} t $ and $f(t) = n$ then $t \in b$. Therefore $\cup b(y) = 1$ if and only if $\varphi(y, \bar{a})$ as required.

For the converse, suppose $\calM$ is a Kaufmann model and let $f^M$ be a weak specializing for $T^M_{\PA}$ (which exists by $\MAone$). We claim that this $f^M$ satisfies $RC(f)$. To see this, fix $s \in T^M_{\PA}$ and $n < \omega$ and suppose that $f^M(s) = n$ and there are uncountably many $t$ above $s$ in $T^M_{\PA}$ with $f^M(t) = n$. Then the set of these $t$ must generate a cofinal branch $b$ by weak specialness so we can define that branch as $\cup b(y) = 1$ if and only if  $M \models \varphi(y, \bar{a})$ by rather classlessness, hence $RC(f)$ is satisfied.
\end{proof}

Before continuing on to the proof of Part 2, let me comment on the relation between this proof and Shelah's \cite[Theorem 6]{Shelah2ndorderprop2}. This theorem, despite being foundational in the field seems to have been very rarely written down aside from in the original article. Restricted to the case of Kaufmann models of $\PA$ and $\ZF$, Shelah's proof shows much the same as what is shown above. The difference is that he replaces the application of $\MAone$ by a concrete use of a ccc forcing to specialize $T^M_{\PA}$. As a result his proof shows (in our language) that every $\mathcal L_\PA$ reduct of a model of $ES(f) \land RC(f)$ is Kaufmann (this is identical to the backward direction above) and, for every Kaufmann model $\calM$ there is a ccc forcing extension of $V$ in which $\calM$ has an expansion to a model of $ES(f) \land RC(f)$ (using $\MAone$ instead of forcing this is the forward direction). By composing this result with Theorem \ref{ksthm} Shelah gets that every model of set theory has a forcing extension in which there is a model of $ES(f) \land RC(f)$. By Keisler's completeness theorem it follows that in $V$ this sentence is consistent and hence has a model. But then that model's reduct to $\mathcal L_\PA$ (respectively $\calL_{\ZF}$) is Kaufmann thus proving that $\ZFC$ suffices to prove the existence of Kaufmann models. A natural question is whether the detour through forcing extensions was necessary in this argument. Part 2 will show that, at least sometimes, the answer is ``yes".

\begin{proof}[Proof of Part 2 of Theorem \ref{mainthm22}]
Let $T$ be a consistent completion of $\PA$ or $\ZF$ and let $\calL$ be its language. Let $\calM \models T$ be the model constructed in the proof of Theorem \ref{diamondthm} and let $S$ be the Souslin subtree of $T^M_{sat}$. The existence of this model is the only application of $\diamondsuit$. Let $\mathcal L' \supseteq \mathcal L$ be any language extending $\calL$ and $\calM'$ be any expansion of $\calM$ to an $\calL'$ structure. We need to show that there is an $\calN$ which agrees with $\calM'$ on any countably many ${L_{\omega_1, \omega}(Q)}$ sentences but whose $\calL$-reduct has a satisfaction class (and hence is not rather classless). 

Since $S$ is Souslin, the $L_{\omega_1, \omega}(Q)$ theory of $\calM$ is the same in $V$ as in any generic extension of $V$ by $S$ by Observation \ref{absoluteness} plus the fact that, since any Souslin tree is $\omega$-distributive, $S$ won't add new reals and hence it won't add new $L_{\omega_1, \omega}(Q)$ sentences either. Let $G \subseteq S$ be generic and work in $V[G]$. In this model, the branch $G$ codes a satisfaction class $A_G$ for $\calM$. Consider a new theory, $T'$ in the language $\mathcal L'$ enriched with a unary predicate $A$ giving the $L_{\omega_1, \omega}(Q)$ theory of $\calM'$ in $\mathcal L'$ plus ``$A$ is a satisfaction class". This theory is consistent, since $\langle M', ...,  A_G\rangle$ is a model and, moreover, it is in $V$ since it's the union of a theory in $V$ with a simple set of additional sentences, definable in any model of set theory. Since consistency is absolute between models of set theory with the same natural numbers, $V \models$``$T'$ is consistent". Hence by Keisler's completeness theorem, any countable subtheory $\bar{T} \subseteq T'$ has a model $\calN$ with a satisfaction class. Consider the reduct of $\calN$ to $\mathcal L'$. This model is exactly what we wanted so the proof is complete.

\end{proof}

It's tempting to conclude in the above proof that $\calN$ can be made to be fully $L_{\omega_1, \omega}(Q)$ equivalent to $\calM'$ but Keisler's theorem is sentence by sentence and since $L_{\omega_1, \omega}$ lacks a compactness theorem, it's not clear that this conclusion can be made, hence the restriction to countable subtheories. I'm not sure whether the stronger conclusion is consistent or not, though I suspect that it is.


\section{Conclusion and Open Questions}
There remain many open questions in this area. I want to finish this paper by listing some. The most interesting is the following.
\begin{question}
Is there a non-forcing-theoretic characterization of destructible Kaufmann models? Is this related to some sort of resplendency or something truth theoretic?
\end{question}

Regarding the construction of destructible Kaufmann models by forcing:
\begin{question}
Does forcing with countable, recursively saturated models ordered by end extension add a Kaufmann model whose satisfaction tree is Souslin (not just having a Souslin subtree)?
\end{question}

Also, it's worth asking:
\begin{question}
What tree types can a satisfaction tree take? In particular, can the satisfaction tree for a Kaufmann model be Souslin (and not just contain a Souslin subtree)? What about trees types for trees of the form $T^M_{\PA}$/$T^M_{\ZF}$?
\end{question}

This paper is not the first to consider strong logics in the context of Kaufmann models. Surprisingly though the following appears to be open.
\begin{question}
Which logics extending $L_{\omega, \omega}$ can axiomatize Kaufmann models provably in $\ZFC$? Consistently?
\end{question}

Finally, while this entire discussion has concerned $\aleph_1$-Kaufmann models, there seems to be a wealth of possible directions in studying general $\kappa$-Kaufmann models. Note that by Schmerl's Theorem \ref{schmerl}, if $\kappa$ has the tree property then there are no $\kappa$-Kaufmann models. The converse of this appears to be open.

\begin{question}
 Does the statement ``there are no $\aleph_2$-Kaufmann models" imply the tree property on $\aleph_2$? What is the consistency strength of ``there are no $\aleph_2$-Kaufmann models"?
\end{question}

\bibliography{mopabib}
\bibliographystyle{plain}

\end{document}